\numberwithin{equation}{section} 
\definecolor{linkred}{rgb}{0.6,0,0}
\newtheorem{thm}{Theorem}[section]
\newtheorem{cor}[thm]{Corollary}
\newtheorem{lem}[thm]{Lemma}
\newtheorem{prop}[thm]{Proposition}
\theoremstyle{definition}
\newtheorem{definition}[thm]{Definition}
\newtheorem{rem}[thm]{Remark}
\newtheorem{re}[thm]{}
\newtheorem{tableRem}[thm]{Table}
\newtheorem{convention}[thm]{Convention}
\numberwithin{equation}{section}
\newcommand{\F}{\mathbb{F}}
\newcommand{\C}{\mathbb{C}}
\newcommand{\Sp}{\mathrm{Sp}}
\newcommand{\PP}{\mathbb{P}}
\newcommand{\FF}{\mathbb{F}}
\newcommand{\ZZ}{\mathbb{Z}}
\newcommand{\CC}{\mathbb{C}}
\newcommand{\jac}{\mathrm{Jac}}
\newcommand{\Hyp}{\mathrm{Hyp}}
\newcommand{\Sup}{\mathrm{Sup}}
\newcommand{\Jac}{\mathrm{Jac}}
\begin{document}

\title[Superelliptic Curves with Level Structure]{Explicit Moduli of Superelliptic Curves with Level Structure}%
\author{Olof Bergvall}%
\author{Oliver Leigh}
\email{olof.bergvall@hig.se}
\email{oliver.leigh@math.su.se}
\address{Dept. of Electronics, Mathematics and Natural Sciences, University of G\"avle}
\address{Dept. of Mathematics, Stockholm University}

\begin{abstract}
In this article we give an explicit construction of the moduli space of trigonal superelliptic curves with level 3 structure. 
The construction is given in terms of point sets on the projective line and leads to a closed formula for the number of connected (and irreducible) components of the moduli space.
The results of the article generalise the description of the moduli space of hyperelliptic curves with level 2 structure,  due to Dolgachev and Ortland, Runge and Tsuyumine.
\\
\\
Keywords: Superelliptic curves, Moduli spaces, Hurwitz theory
\\
\\
MSC Subject Classification: 14D22, 14D23, 14H10, 14H45, 14H51
\end{abstract}

\maketitle

\section{Introduction}
As Mumford describes in \cite[\S2]{mumford_tata2}, $2$-torsion divisors on a hyperelliptic curve correspond precisely to degree zero linear combinations of ramification points. 
Hence, if one takes distinct points $P_1, \ldots, P_{2g+2}$ on $\PP^1$ and  considers $C$, the unique hyperelliptic curve ramified over these points, then one can hope to explicitly describe symplectic bases for the $2$-torsion Jacobian $\Jac(c)[2]$ in terms of $P_1, \ldots, P_{2g+2}$.
Indeed, after choosing an ordering for the branch points, it turns out that there is a natural way to obtain a (full) symplectic level $2$ structure on $C$ from combinations of $P_1, \ldots, P_{2g+2}$.
One can then obtain any level 2 structure on $C$ via the symplectic group $\Sp(2g,\FF_2)$.
On top of this, since each choice of ordering will give rise to a different symplectic level $2$, this construction naturally defines an embedding of the symmetric group $S_{2g+2}$ into the symplectic group $\Sp(2g,\FF_2)$. 

Using this construction, Dolgachev and Ortland \cite{dolgachevortland} considered $\Hyp_g[2]$, the moduli space of hyperelliptic curves with level $2$ structure. They showed that each irreducible component of  $\Hyp_g[2]$ is isomorphic to $M_{0,2g+2}$, the moduli space of smooth rational curves with $2g+2$ distinct markings, and that the irreducible components  of  $\Hyp_g[2]$ are indexed by the set of cosets $\mathscr{C}:=\Sp(2g,\FF_2)/S_{2g+2}$.
Dolgachev and Ortland also posed the question: \textit{What are the connect components of $\Hyp_g[2]$?}
This question was answered independently by Tsuyumine \cite{tsuyumine} and Runge \cite{runge}, who showed that the irreducible components are also the connected components. 
Thus, if we let $X_c$ be a variety isomorphic to $M_{0,2g+2}$ and indexed by the coset $c \in \mathscr{C}$, then there is an isomorphism
\begin{equation*}
 \Hyp_3[2] \cong \bigsqcup_{c \in \mathscr{C}} X_c.
\end{equation*}

It is natural to ask which parts of the above discussion extend to other types of ramified covers.
In the present article we proceed in this direction by investigating cyclic covers of $\PP^1$ called \textit{superelliptic curves}.
We pay special attention to the simplest non-hyperelliptic case, namely trigonal curves. 
We find (somewhat surprisingly) an almost perfect analogy between the case of cyclic trigonal curves with symplectic level $3$ structure and hyperelliptic curves with level $2$ structure.
Our main result is the following.

\begin{thm}
\label{mainthm}
Let $g$ be a positive integer and take the unique integers $n$ and $k$ such that $0 \leq k \leq 2$ and $g+2=3n-k$.
The moduli space $\Sup^{3}_{g}[3]$ of trigonal superelliptic curves with level $3$ structure has 
\[
|\Sp(2g,\FF_3)| \left(
\sum_{0\leq i < \left\lceil \frac{g+2-2k}{6}  \right\rceil} \frac{1}{ (3i +k )!  (g+2 -( 3i + k) )! } 
~~+~~
\sum_{\left\{i\in  \ZZ \,:\, i= \frac{g}{2}+1 \right\}  } \frac{1}{ 2 (i!)^2} 
\right)
\]
 connected components. Each connected component is irreducible and is isomorphic to the moduli space $M_{0,g+2}$
 of smooth rational curves with $g+2$ marked points. 
\end{thm}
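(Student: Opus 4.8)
\noindent\emph{Proof strategy.} The approach follows the template of the hyperelliptic case (Dolgachev--Ortland, Runge, Tsuyumine): stratify $\Sup^3_g$ by the discrete ramification data, identify the level-$3$ cover of each stratum with a disjoint union of copies of $M_{0,g+2}$ by means of an explicit ``Mumford-type'' description of the $3$-torsion, and then count; the analytic core is the determination of a monodromy group, the exact analogue of the Runge--Tsuyumine theorem.

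First I would set up the combinatorics. By Riemann--Hurwitz a cyclic trigonal curve of genus $g$ is a $\ZZ/3$-cover of $\PP^1$ totally ramified over exactly $g+2$ points $P_1,\dots,P_{g+2}$, and, as a superelliptic curve, is determined by the $P_i$ together with the local monodromies $a_i\in\{1,2\}=\ZZ/3\setminus\{0\}$, the datum $(a_i)$ being canonical only up to the global sign change corresponding to the two generators of the deck group. Writing $m$ for the number of indices with $a_i=1$, the condition $\sum_i a_i\equiv 0\pmod 3$ reads $m\equiv k\pmod 3$, and the unordered pair $\{m,g+2-m\}$ is the invariant that separates the connected components of the space of branch data; hence $\Sup^3_g=\bigsqcup_{[m]}\Sup^3_g([m])$ over such pairs, normalised so that $0\le m\le (g+2)/2$. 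A choice of ordering of the $g+2$ branch points then exhibits the forgetful morphism $M_{0,g+2}\to\Sup^3_g([m])$ as a Galois covering with group $\Gamma_{[m]}$, where $\Gamma_{[m]}=S_m\times S_{g+2-m}$ when $m<(g+2)/2$ and $\Gamma_{[m]}=(S_m\times S_m)\rtimes\ZZ/2$ when $m=(g+2)/2$; the extra $\ZZ/2$ in the balanced case is the sign change (the only case in which the sign change preserves the stratum), and this case occurs precisely when $g$ is even.

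Next I would analyse the level cover of a fixed stratum $\Sup^3_g([m])$. Using the explicit description of $\Jac(C)[3]$ — the superelliptic analogue of Mumford's presentation of hyperelliptic $2$-torsion, built from the ramification-point divisor classes $[R_i-R_j]$ together with the $\ZZ[\zeta_3]$-module structure on $\Jac(C)$ — one shows that the level-$3$ cover $\Sup^3_g([m])[3]\to\Sup^3_g([m])$ is induced from the Galois covering $M_{0,g+2}\to\Sup^3_g([m])$ via a monodromy homomorphism into $\Sp(2g,\FF_3)$, so that $\Sup^3_g([m])[3]$ is a disjoint union of $|\Sp(2g,\FF_3)|/|N_{[m]}|$ copies of $M_{0,g+2}$, where $N_{[m]}\subseteq\Sp(2g,\FF_3)$ is the monodromy group of the stratum. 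Since $M_{0,g+2}$ is smooth, irreducible and rational, these copies are at once the irreducible components and, being pairwise disjoint, the connected components, which settles the final sentence of the theorem. The heart of the proof is then to compute $|N_{[m]}|$ and show it equals $m!\,(g+2-m)!$ (respectively $2\,(m!)^2$ in the balanced case): the lower bound is constructive — the branch-point description realises inside $N_{[m]}$ the relabelling symmetries of the stratum as well as the Eisenstein automorphism of the cover — while the upper bound, that no further elements of $\Sp(2g,\FF_3)$ arise by monodromy, is the genuine obstacle and is the superelliptic counterpart of the Runge--Tsuyumine theorem; I would prove it either by Runge's method (exhibiting enough modular/theta-type functions on $M_{0,g+2}$ to rigidify the level structure) or by the method of Tsuyumine and A'Campo (writing down generators of the orbifold fundamental group of the stratum — half-twists interchanging like-coloured branch points and generators of $\pi_1(M_{0,g+2})$ — computing their action on $\Jac(C)[3]$ through the $\ZZ[\zeta_3]$-structure, and bounding the order of the subgroup they generate).

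Finally, granting $|N_{[m]}|=|\Gamma_{[m]}|$, the number of connected components of $\Sup^3_g[3]$ is $\sum_{[m]}|\Sp(2g,\FF_3)|/|\Gamma_{[m]}|$. Re-indexing the strata with $m<(g+2)/2$ by $m=3i+k$ with $i\ge 0$, the inequality $m<(g+2)/2$ is equivalent to $i<\left\lceil\frac{g+2-2k}{6}\right\rceil$ and $|\Gamma_{[m]}|=(3i+k)!\,(g+2-(3i+k))!$, producing the first sum; the balanced stratum, which occurs exactly when $g$ is even, has $m=g/2+1$ and $|\Gamma_{[m]}|=2\,((g/2+1)!)^2$, producing the second sum. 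Together with the isomorphism of each component with $M_{0,g+2}$ noted above, this is exactly the formula in the statement.
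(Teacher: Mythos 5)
Your stratification, group theory, and final re-indexing (including the observation that the balanced stratum occurs exactly when $g$ is even, and the translation of $m<(g+2)/2$ into $i<\lceil(g+2-2k)/6\rceil$) all match the paper. However, you take a monodromy-theoretic route to the key lemma and misplace where the genuine work sits, proposing a substantially harder argument than is needed. You identify the crux as an upper bound $|N_{[m]}|\leq|\Gamma_{[m]}|$ on the monodromy image --- equivalently, that $\pi_1(M_{0,g+2})$ acts trivially on $\Jac(C)[3]$ --- and suggest attacking it with Runge-style theta functions or Tsuyumine/A'Campo-style generator-by-generator monodromy estimates, as in the hyperelliptic level-$2$ literature. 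The paper sidesteps this entirely: by a theorem of Poonen--Schaefer and Wawrow (Theorem~\ref{thm_delta_basis}) the ramification classes $D_i=[Q_i-Q_\infty]$, $i=1,\dots,g$, form an $\FF_3$-basis of the subspace $\Delta$ they span, and a direct Weil-pairing calculation (Proposition~\ref{weil_pairing_prop}, Corollary~\ref{cor_maximal_isotopic_Delta}) shows $\Delta$ is a Lagrangian of $\Jac(C)[3]$. Since the $D_i$ are built purely from the labelled branch points, the symplectic basis produced from them in~\ref{extending_basis_for_3_torsion} varies regularly over $M_{0,g+2}$ and yields an honest morphism $\psi_{\mathrm{id}}:M_{0,g+2}\to\Sup^3_{g,\mathsf{m}}[3]$ splitting the forgetful map; the existence of this section \emph{is} your ``upper bound,'' obtained with no modular-function machinery. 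Conversely, what you wave off as the merely ``constructive'' lower bound is, in the paper, the step requiring care: the induced homomorphism $\mathsf{A}_{\mathsf{m}}\to\Sp(2g,\FF_3)$ must be shown injective (Lemma~\ref{lem_psi_A_injective}), and this again leans on Theorem~\ref{thm_delta_basis} to control all relations among the $D_i$. The paper then finishes without ever naming a monodromy group: each $\psi_A$ is a finite, injective, \'etale map from an irreducible source, hence an isomorphism onto an irreducible component (Lemma~\ref{lem_psi_isomorphism_onto_irr_component}); the images of the $\psi_A$ are pairwise equal or disjoint (Lemma~\ref{lem_psi_A_disjoint_images}); and a fibre count over a geometric point of $\Sup^3_{g,\mathsf{m}}$ gives $|\Sp(2g,\FF_3)|/|\mathsf{A}_{\mathsf{m}}|$ components (Lemma~\ref{lem_sup_m[3]_explicit_moduli}). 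Your arithmetic converting this count into the displayed formula is correct, but the Runge/Tsuyumine-style analysis you propose for the heart of the proof is both unproven in your write-up and unnecessary given the Poonen--Schaefer/Wawrow input.
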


It is plausible that the above theorem can be used to pave a way towards the conjectures of Bergstr\"om-van der Geer \cite[Sec. 12]{bergstromvandergeer}
and the present work may be useful in the moduli theory related to $2$-spin Hurwitz numbers via \cite{leigh_div_ram} and \cite{leigh_loc}.
Other possible directions for future work include cohomological computations, see e.g. \cite{bergstrombergvall} and \cite{bergvall_gd},
and investigations of special kinds of superelliptic curves, e.g. Picard curves, Belyi curves or plane curves.
Relevant previous work include Accola's characterization of cyclic trigonal curves  in terms of vanishing properties of theta functions \cite{accola},
Kontogeorgis' study of automorphism groups of rational function fields \cite{kontogeorgis},
Kopeliovich's computations of Thomae formulae for cyclic covers of the projective line \cite{kopeliovich}
and Wangyu's characterization of cyclic covers of the projective line with prime gonality \cite{wangyu}.
For further references, and a survey of the field, see \cite{malmendiershaska}.

The paper is structured as follows. In Sections~\ref{background_hyperelliptic} and \ref{background_sup_sec} we
review the necessary background on hyperelliptic and superelliptic curves.
We also rephrase some of the classical material into a setup which is mutually compatible
and generalizable. In Section~\ref{divisors_section} we study divisors on superelliptic curves.
In particular, we study divisors generated by ramification points and how the Weil pairing
behaves on pairs of such divisors. Finally, in Section~\ref{trig_sec} we specialize to
the case of trigonal curves. Among other results, we obtain an explicit description of the moduli space
of superelliptic trigonal curves with level $3$ structure and an explicit
formula for the number of connected (and irreducible) components of this space (see Theorem~\ref{mainthm} above).

\section{Background on Hyperelliptic Curves}\label{background_hyperelliptic}

\begin{re}[\textit{Overview of section}]
In this section we will recall facts about hyperelliptic curves and give an overview of an explicit construction of the moduli space of hyperelliptic curves with level 2 structure. This construction, which is based upon finite  subsets of the projective line, can be mainly attributed to Mumford \cite{mumford_tata2} and Dolgachev-Ortland \cite{dolgachevortland}. We will present this material in a way that can be readily generalised to superelliptic curves. 
\end{re}

\begin{definition}[\textit{Hyperelliptic curves}] \label{def_hyperelliptic}
A \textit{hyperelliptic curve} is a degree $2$ morphism $\pi: C \to \PP^1$ such that $C$ is a smooth (connected) curve of genus $g>1$. 

Two hyperelliptic curves $\pi:C\rightarrow \PP^1$ and $\pi':C'\rightarrow \PP^1$ are isomorphic if there are isomorphisms $\alpha:C\rightarrow C'$ and $\beta: \PP^1 \rightarrow \PP^1$ such that $\pi \circ \alpha = \beta \circ \pi'$. 
\end{definition}

\begin{rem}[\textit{Equivalent defintion of hyperelliptic curve}] \label{rem_hyperelliptic_definiton}
 A much more standard definition of a \textit{hyperelliptic curve} is a curve $C$ with genus $g>1$ such that a degree $2$ morphism $\pi: C \to \PP^1$ \textit{exists}. However, it is well known that any such degree $2$ morphism is unique up to isomorphism, so the two definitions are equivalent (see for example \cite[IV Prop 5.3]{hartshorne}). We use the definition of hyperelliptic curves from definition \ref{def_hyperelliptic} since it is more in-line with the definition of \textit{superelliptic curves} which will be given in definition \ref{superelliptic_def}.  
\end{rem}

\begin{re}[\textit{Moduli space of hyperelliptic curves}] 
Definition \ref{def_hyperelliptic} extends naturally to give a moduli functor. We denote by $\Hyp_g$  the (coarse) moduli space parametrising  hyperelliptic curves. Moreover, by the uniqueness of the hyperelliptic morphism pointed out in remark \ref{rem_hyperelliptic_definiton}, we have that there exists a natural inclusion $\Hyp_g \hookrightarrow M_g$. The image of this inclusion is called the \textit{hyperelliptic locus}. 
\end{re}

\begin{rem}\label{remark_hyp_stack} 
In this article the main focus is on the coarse moduli \textit{scheme} of hyperelliptic curves $\Hyp_g$  instead of the moduli \textit{stack} of hyperelliptic curves $\mathcal{H}\hspace{-0.1em}\mathit{yp}_g$. A key difference between the two is that $\mathcal{H}\hspace{-0.1em}\mathit{yp}_g$ keeps track of the automorphisms of the hyperelliptic curves.  The stack theoretic viewpoint for this space and the analogous superelliptic case (appearing in section \ref{background_sup_sec}) was considered in \cite{arsievistoli}. 
\end{rem}

\begin{rem}
Since we have defined $\Hyp_g$ as a moduli space of maps, this moduli space can also naturally be viewed as the \textit{Hurwitz space} $\mathrm{Hur}_g(2)$. 
\end{rem}

\begin{re}[\textit{Affine models for hyperelliptic curves}] \label{hyp_affine_model}
It is a well known result that a genus $g$ hyperelliptic curve $\pi: C \to \PP^1$ has a non-unique affine model of the form
\[
s^2 - f(t)=0 
\]
where $f \in \CC[t]$ has unique roots and is either of degree $2g+1$ or $2g+2$.
(see for example \cite[p. 3.28]{mumford_tata2}). 
Moreover, if $C^\circ$ is the smooth locus of the above affine variety, then $C$ is the the smooth completion of $C^\circ$ and $\pi$ is the associated projection map arising from the projection to the $t$ coordinate. 

A second chart for $C$ can be defined by the equation $0 = u^p - h(v)$
where $h \in\CC[v]$ is the unique polynomial with $ h(v) = v^{2g+2} f(\frac{1}{v})$. The change of coordinates is then defined by $(t,s)  \mapsto \left(\frac{1}{v} ,\frac{u}{v^{g+1}}\right)$.

The equivalence relation between two isomorphic affine models is described via 
\[
s^2 - f(t) ~~ \sim ~~ s^2 - (ct+d)^{2g+2} f\left(\frac{at+b}{ct+d}\right) 
\hspace{0.5cm}
\mbox{for}
\hspace{0.5cm}
\left(\begin{array}{cc} a & b \\ c & d \end{array} \right) \in \mathrm{GL}(2,\C). 
\]
Note that if $s^2 - h(t)$ is the resulting polynomial on the right hand side, $f$ and $h$ can have degrees differing by $1$ depending on whether a root was moved to or away from infinity. 
\end{re}

\begin{re}[\textit{Construction via choosing ramification points}] \label{construct_hyperelliptic_from_points}
Consider the relationship between the affine model and a hyperelliptic curve $\pi:C\rightarrow \PP^1$ described in \ref{hyp_affine_model}. An immediate observation from this relationship is that the ramification points of $\pi$ occur at the roots of $f(t)$ for $\deg(f) = 2g+2$ and also at $\pi^{-1}(\infty)$ if $\deg(f) = 2g+1$.

A result of this observation is that one can construct a unique hyperelliptic curve by choosing a set of $2g+2$ points in $\PP^1$. Taking into account the equivalence described in \ref{hyp_affine_model} we arrive at the following theorem.
\end{re}

\begin{thm}[\textit{Scheme structure of $\Hyp_g$,  \cite{fischer, lonstedkleiman}}] \label{isomorphism_hyperelliptic_moduli_scheme}
There is an isomorphism of schemes 
\[
M_{0,2g+2}/S_{2g+2} \longrightarrow \Hyp_g.
\] 
\end{thm}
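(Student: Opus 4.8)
The plan is to exhibit a natural transformation between the relevant moduli functors and check it is an isomorphism, using the affine model discussion of~\ref{hyp_affine_model} and the ramification-point construction of~\ref{construct_hyperelliptic_from_points}. First I would set up the map on points. Given an ordered tuple $(P_1,\dots,P_{2g+2})$ of distinct points on $\PP^1$, representing a point of $M_{0,2g+2}$, form the divisor $D = P_1 + \cdots + P_{2g+2}$ and take $C$ to be the unique double cover of $\PP^1$ branched exactly over $D$ (equivalently, pick coordinates so that none of the $P_i$ is at infinity, write $f(t) = \prod (t - t_i)$, and let $C$ be the smooth completion of $s^2 = f(t)$, as in~\ref{hyp_affine_model}). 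This $C$ is a smooth connected curve of genus $g$ by the Riemann--Hurwitz formula, so $\pi \colon C \to \PP^1$ is a hyperelliptic curve in the sense of Definition~\ref{def_hyperelliptic}. Permuting the $P_i$ does not change $D$ and hence does not change $C$, so the construction descends to $M_{0,2g+2}/S_{2g+2}$ and defines a morphism of schemes $M_{0,2g+2}/S_{2g+2} \to \Hyp_g$; one should note this argument works in families, over an arbitrary base, since the double cover branched along a relative effective Cartier divisor is constructed functorially (take $C = \mathbf{Spec}$ of $\mathcal{O} \oplus \mathcal{L}^{-1}$ with $\mathcal{L}^{\otimes 2} = \mathcal{O}(D)$), which is what promotes it to a morphism of the coarse schemes.

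Next I would construct the inverse. Given a hyperelliptic curve $\pi \colon C \to \PP^1$, the branch locus of $\pi$ is a well-defined reduced divisor of degree $2g+2$ on $\PP^1$ (degree by Riemann--Hurwitz, reducedness because $C$ is smooth), hence an unordered set of $2g+2$ distinct points, i.e. a point of $M_{0,2g+2}/S_{2g+2}$. Two hyperelliptic curves are isomorphic in the sense of Definition~\ref{def_hyperelliptic} precisely when their branch loci differ by an automorphism of $\PP^1$ — the ``if'' direction is the construction above, and the ``only if'' direction follows because an isomorphism $\alpha \colon C \to C'$ with $\pi' \circ \alpha = \beta \circ \pi$ carries ramification points to ramification points and hence branch points to branch points compatibly with $\beta$. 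This shows the two maps are mutually inverse bijections on points, and on families the branch divisor of a family of hyperelliptic curves is again a relative Cartier divisor, giving a morphism $\Hyp_g \to M_{0,2g+2}/S_{2g+2}$ in the other direction. The key input that makes both directions well-defined is the uniqueness of the hyperelliptic map noted in Remark~\ref{rem_hyperelliptic_definiton}: without it, the branch locus would not be an invariant of $C$.

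Finally I would verify that the two morphisms are inverse to each other as morphisms of schemes, not merely bijective on geometric points. Here one uses that $M_{0,2g+2}$ is a fine moduli space (so it suffices to check the composites agree on the universal family) together with the fact that $\Hyp_g$, being a coarse space, is determined by its functor of points up to the usual subtleties; the cleanest route is to observe that both $M_{0,2g+2}/S_{2g+2}$ and $\Hyp_g$ corepresent the same functor — ordered versus unordered tuples of $2g+2$ distinct points on $\PP^1$ modulo $\mathrm{PGL}_2$ on one side, families of hyperelliptic curves modulo isomorphism on the other — and the correspondence branch-locus $\leftrightarrow$ double-cover is a natural isomorphism of these functors. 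This is essentially the content of the cited references~\cite{fischer, lonstedkleiman}, so at this level of detail I would simply assemble the above and appeal to them. The main obstacle, and the only genuinely delicate point, is the scheme-theoretic (as opposed to set-theoretic) statement: one must be careful that taking the double cover branched along a divisor, and conversely extracting the branch divisor, are compatible with base change and do not introduce nilpotents or fail flatness — in characteristic $0$, which is the standing assumption here since we work over $\CC$, this is unproblematic, but it is where the actual work of~\cite{fischer, lonstedkleiman} lies.
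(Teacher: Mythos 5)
Your proposal is correct and follows the route the paper itself indicates: the paper does not prove this statement but cites it to \cite{fischer, lonstedkleiman} after sketching the idea in \ref{hyp_affine_model}--\ref{construct_hyperelliptic_from_points}, and your argument (double cover branched over $2g+2$ points, inverse via the branch locus using uniqueness of the hyperelliptic map, then deferring the scheme-theoretic/families subtleties to the cited references) is the standard fleshing-out of exactly that sketch. The only minor imprecision is the phrase ``corepresent the same functor'' --- coarse moduli spaces do not corepresent their moduli functors, they only coarsely represent them --- but the conclusion you draw (isomorphism of coarse spaces from an equivalence of moduli problems) is still valid.
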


\begin{rem}
Versions of the isomorphism from theorem \ref{isomorphism_hyperelliptic_moduli_scheme}  also exist for compact moduli moduli spaces  e.g. \cite[Ex. 6.25]{harrismorrision} and \cite[Cor. 2.5]{avritzerlange}.
\end{rem}

\begin{convention}[\textit{Notation for choice of hyperelliptic curve}]\label{hyperelliptic_convention}
For the remainder of this section we will assume that $\pi:C\rightarrow \PP^1$ is a hyperelliptic curve of genus $g$. Moreover,  given the equivalence of affine models described in \ref{hyp_affine_model},  we will make the assumption that $\infty \in \PP^1$ is a branch point of $\pi$ and the affine model is 
\[
0= s^2 - \prod_{i=1}^{2g+1} (t- a_i).
\]
We will denote the ramification points as $Q_i := \pi^{-1}(a_i)$ and  $Q_\infty:= \pi^{-1}(\infty)$. 
\end{convention}

\begin{re} [\textit{Natural principal divisors}]\label{natural_principal_hyperelliptic_divisors}
Recalling convention \ref{hyperelliptic_convention} we observe that there are natural principal divisors on $C$ given by:
\begin{enumerate}
\item \textit{Horizontal:} $(s) = \left(\tfrac{u}{v^{g+1}}\right) = \sum_{i=1}^{2g+1}(Q_i  -  Q_\infty). $
\item \textit{Vertical ramified:} $(t-a_i) = \left(\tfrac{1-a_i u}{u}\right) = 2 \,Q_i  -  2\, Q_\infty. $
\item \textit{Vertical unramified:} $(t-a) = \left(\tfrac{1-a u}{u}\right) = \pi^{*}P_a  -  2\, Q_\infty $.
\end{enumerate}
Here $a\in \CC^*\setminus\{a_1,\ldots,a_{2g+1}\}$ and $P_a\in\PP^1$ is the corresponding divisor. The divisor $\pi^{*}P_a$ consists of $2$ distinct points.

\end{re}

\begin{re}[\textit{$2$-torsion in the Jacobian of a hyperelliptic curve}]\label{re_hyperelliptic_Delta_defintion}
A key focus of this section is related to the study of $2$-torsion within the Jacobian of hyperelliptic curves. In this direction, an immediate observation from \ref{natural_principal_hyperelliptic_divisors} is that for $i\in\{1,\ldots,2g+1\}$ we have
\[
2 \cdot (Q_i  -  Q_\infty) \sim 0.
\]
This shows that each $D_i:=[Q_i - Q_\infty]$ is a natural  element of $\Jac(C)[2]$.  We can also now consider the natural $\FF_2$-vector subspace spanned by the classes
\[
\Delta := \FF_2\mbox{-}\mathrm{Span}\Big\{ D_1, \ldots, D_{2g+1} \Big\} \subseteq \Jac(C)[2].
\]
On top of this, the horizontal principal divisor from \ref{natural_principal_hyperelliptic_divisors} gives the  relationship
$
0 = \sum_{i=1}^{2g+1} D_i. 
$
This shows that $\Delta$ is spanned by any choice of $2g$ classes from $ D_1, \ldots, D_{2g+1}$. Indeed, it was shown by Mumford that any of these choices 
gives a basis for $\Delta$ since $\Delta\cong \FF_2^{2g}$. Moreover, combining this result with a basis constructed by Dolgachev and Ortland gives the following key result. 
\end{re}

\begin{prop}[\textit{A symplectic basis for $\Jac(C)[2]$, \cite[IIIa Lem. 2.5]{mumford_tata2} \& \cite[\S3 Lem. 2]{dolgachevortland}}] \label{prop_hyperelliptic_symplectic_basis_Jac2}

There are $\FF_2$-linear isomorphisms  $\Jac(C)[2] \cong \Delta \cong \FF_2^{2g}$. The basis $(A_1,\ldots,A_g, B_1,\ldots,B_g)$ of $\Jac(C)[2] $ defined by
\[
A_i := D_{2i-1} + D_{2i}  
\hspace{0.75cm}
\mbox{and}
\hspace{0.75cm}
B_i := D_{2i} +\cdots+ D_{2g+1},
\]
is symplectic with respect to the Weil Pairing. Moreover, the linear map taking this basis to the standard symplectic basis for $\FF_2^{2g}$ is an isometry.
\end{prop}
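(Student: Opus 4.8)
The plan is to establish the three assertions of Proposition~\ref{prop_hyperelliptic_symplectic_basis_Jac2} in sequence: first the isomorphism $\Jac(C)[2]\cong\Delta\cong\FF_2^{2g}$, then the fact that $(A_1,\ldots,A_g,B_1,\ldots,B_g)$ is a basis, then that it is symplectic, and finally that the transition to the standard symplectic basis is an isometry. For the first part I would invoke Mumford's description \cite[IIIa Lem. 2.5]{mumford_tata2}: the classes $D_i=[Q_i-Q_\infty]$ are all $2$-torsion by \ref{re_hyperelliptic_Delta_defintion}, they span $\Delta$, the single relation $\sum_{i=1}^{2g+1}D_i=0$ coming from the horizontal principal divisor $(s)$ is (up to scalar) the \emph{only} relation among them, so $\dim_{\FF_2}\Delta=2g$; and since $\Jac(C)[2]\cong\FF_2^{2g}$ for a genus $g$ curve, the inclusion $\Delta\subseteq\Jac(C)[2]$ forces equality. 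Consequently any $2g$ of the $D_i$ form a basis of $\Delta$.

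Next I would check that $(A_1,\ldots,A_g,B_1,\ldots,B_g)$ is a basis. Writing each $A_i$ and $B_i$ in terms of $D_1,\ldots,D_{2g+1}$ and using the relation $D_{2g+1}=D_1+\cdots+D_{2g}$ to eliminate $D_{2g+1}$, one gets an explicit $2g\times 2g$ matrix over $\FF_2$ expressing $(A_i,B_i)$ in the basis $(D_1,\ldots,D_{2g})$; this matrix is block-triangular with unit anti-diagonal-type blocks (the $A_i$ each involve a consecutive pair $D_{2i-1}+D_{2i}$, the $B_i$ are tails), so it is invertible. Hence the tuple is a basis.

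For the symplectic property I would compute the Weil pairing $\langle D_i,D_j\rangle$ for the generators. The key input is the formula for the Weil pairing on $2$-torsion of a hyperelliptic Jacobian in terms of the combinatorics of ramification points (this is standard, going back to Mumford): for the divisor classes supported on ramification points, $\langle[Q_i-Q_\infty],[Q_j-Q_\infty]\rangle$ is $1$ when $i\neq j$ and $0$ when $i=j$ (so the Gram matrix of the $D_i$ is $J+I$ off the diagonal conventions, i.e. all-ones off the diagonal). Using bilinearity and this Gram matrix, one computes $\langle A_i,A_j\rangle$, $\langle B_i,B_j\rangle$ and $\langle A_i,B_j\rangle$: the $A_i$ pair trivially among themselves (each is a sum of two $D$'s and the all-ones form kills such even-weight combinations against each other in the right way), the $B_i$ likewise, and $\langle A_i,B_j\rangle=\delta_{ij}$ after carefully counting the overlap of the index sets $\{2i-1,2i\}$ and $\{2j,2j+1,\ldots,2g+1\}$ modulo $2$. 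This is the combinatorial heart of the argument and the step I expect to be the main obstacle — not because any single computation is hard, but because one must fix conventions for the Weil pairing on these specific divisor classes and then do the parity bookkeeping uniformly in $g$; citing \cite[\S3 Lem. 2]{dolgachevortland} for the precise pairing values is the cleanest route.

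Finally, the claim that the linear map sending $(A_1,\ldots,A_g,B_1,\ldots,B_g)$ to the standard symplectic basis of $\FF_2^{2g}$ is an isometry is then immediate: by construction such a map intertwines the Weil pairing (which we have just shown has the standard Gram matrix in the $A,B$ basis) with the standard symplectic form on $\FF_2^{2g}$, and an isomorphism of $\FF_2$-vector spaces carrying one nondegenerate alternating form to another in matching bases is by definition an isometry. I would close by remarking that nondegeneracy of the Weil pairing (hence of the resulting form) is automatic, so no further verification is needed.
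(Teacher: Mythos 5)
Your proposal is mathematically correct, and the parity bookkeeping you flag as the main obstacle does work out: writing $e(D_i,D_j)=1$ for $i\neq j$ and $0$ for $i=j$, the intersection of the index sets $\{2i-1,2i\}$ and $\{2j,\ldots,2g+1\}$ has size $0,1,2$ according to $i<j$, $i=j$, $i>j$, and since the full rectangle of cross-terms has even cardinality $2(2g+2-2j)$ one gets $\langle A_i,B_j\rangle=\delta_{ij}$; the rectangles for $\langle A_i,A_j\rangle$ and $\langle B_i,B_j\rangle$ likewise yield $0$ after subtracting the overlaps. (A small shortcut: once the Gram matrix of $(A_1,\ldots,B_g)$ is the standard symplectic form, linear independence is automatic — pairing a hypothetical relation $\sum c_iA_i+\sum d_jB_j=0$ against $A_k$ and $B_l$ kills all coefficients — so your explicit matrix-inversion check is a redundant second route.) Where you diverge from the paper: the paper does not reprove the proposition but cites Mumford and Dolgachev--Ortland, and Remark~\ref{prop_hyperelliptic_symplectic_basis_Jac2} is followed by a remark explaining that the Dolgachev--Ortland argument is analytic — $\pi:C\to\PP^1$ is treated as a two-sheeted cover, $A_i$ and $B_i$ are realised as explicit cycles in $H_1(C,\ZZ)$ running along the two sheets, and these are assembled with a normalised basis of $H^0(C,\Omega_C)$ into a branch-point period matrix. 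Your route is algebraic, resting on the Gram matrix $w(D_i,D_j)=-1$ for $i\neq j$; for this fact the right citation is Mumford \cite[IIIa Lem. 2.5]{mumford_tata2} rather than Dolgachev--Ortland, whose lemma gives the period-matrix construction, not the algebraic pairing values. It is worth noting that your key input is precisely what the paper goes on to establish, for all primes $p$, in Proposition~\ref{weil_pairing_prop} by an explicit rational-function computation — so your proof is arguably more in the spirit of the paper's later superelliptic arguments than the analytic proof it actually cites, at the cost of not producing the period matrix as a by-product.
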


\begin{rem}
The methods used in \cite[\S3 Lem. 2]{dolgachevortland} to prove Proposition \ref{prop_hyperelliptic_symplectic_basis_Jac2}  are more analytic in nature than the methods employed in this article. Indeed, the authors of \cite{dolgachevortland} consider $\pi:C\rightarrow \PP^1$ as a two-sheeted covering and consider a symplectic basis of cycles in $H_1(C,\ZZ)$ corresponding to the basis elements of Proposition \ref{prop_hyperelliptic_symplectic_basis_Jac2}. 

To be more precise, $A_i$ corresponds to a path which goes from $Q_{2i-1}$ along one sheet to $Q_{2i}$ and then returns along the other sheet, while $B_i$ corresponds to a path which goes from $Q_{2i}$ along one sheet to $Q_{2g+1}$ and then returns along the other sheet. The symplectic basis of cycles in $H_1(C,\ZZ)$ is then combined with a normalised basis of $H^0(C,\Omega_C)$ to construct a \textit{branch point period matrix}. 
\end{rem}

\begin{re}[\textit{Inclusion of the symmetric group into the symplectic  group}]\label{hyperelliptic_symmetric_group_inclusion}
The symmetric group $S_{2g+2}$ permutes the points $Q_1,\ldots, Q_{2g+1},Q_{\infty}$ and thus defines an action on the set $\mathsf{D}:=\{D_1,\ldots,D_{2g}\}$. Since any permutation of $Q_1,\ldots, Q_{2g+1},Q_{\infty}$ will give the same hyperelliptic curve, the action of $S_{2g+2}$ on $\mathsf{D}$ extends to give a change of symplectic basis. This defines a group homomorphism $S_{2g+2} \rightarrow \Sp(2g,\FF_2)$ which is in fact an embedding (see \cite[p. 60]{bergvallthesis} for details and further references).
\end{re}

 \begin{re}[\textit{Natural morphism to $\Hyp_{g}[2]$}]
 We denote the (coarse) \textit{moduli space of hyperelliptic curves with level-2 structure} by $\Hyp_{g}[2]$. This is a moduli space parametrising pairs consisting of a hyperelliptic curve $\pi:C\rightarrow \PP^1$ and an isometry $\eta: \FF_2^{2g} \rightarrow \Jac(C)[2]$, where $\FF_2^{2g}$ uses the standard symplectic form and $\Jac(C)[2]$ uses the Weil pairing. Now, Proposition \ref{prop_hyperelliptic_symplectic_basis_Jac2}  (which extends easily to families) naturally gives a morphism
 \[
 M_{0,2g+2} \rightarrow \Hyp_g[2].
 \]
Moreover, it is shown in \cite[\S3 Thm. 1]{dolgachevortland} that this morphism is an isomorphism onto an irreducible  component of $\Hyp_g[2]$. On top of this, if we 
consider the morphism that forgets the isometry, $\Hyp_g[2] \rightarrow \Hyp_g$, then we obtain the following commutative diagram. 
\[
\begin{tikzcd}
M_{0,2g+2} \arrow[r, hook] \arrow[d]
& \Hyp_{g}[2] \arrow[d] \\
M_{0,2g+2}/S_{2g+2} \arrow[r, "\cong"]
& \Hyp_{g}
\end{tikzcd} 
\]
In fact, the morphism $ M_{0,2g+2} \rightarrow \Hyp_g[2]$ actually defines an isomorphism to a connected component of $\Hyp_g[2]$. Using this construction one can describe all of the of connected components of $\Hyp_g[2]$  and arrive at the following theorem. 
\end{re}

\begin{thm}[\textit{Decomposition of $\Hyp_g[2]$, \cite[Thm. 2]{tsuyumine} \& \cite[Thm. 4.1]{runge}}] \label{Hyperelliptic_w_level_structure_decomposition_theorem}  
Consider  the inclusion of the symmetric group in the symplectic group described in \ref{hyperelliptic_symmetric_group_inclusion} and denote the quotient set $\mathrm{Sp}(2g,\FF_2)/S_{2g+2}$ by
$\mathscr{C}$. Then, if $X_{c}$ denotes a copy of $M_{0,2g+2}$ indexed by $c \in \mathscr{C}$,  there is an isomorphism of schemes
\begin{equation*}
\mathrm{Hyp}_g[2] \overset{\cong}{\longrightarrow} \bigsqcup_{c \in \mathscr{C}} X_c,
\end{equation*}
so $\mathrm{Hyp}_g[2]$ is smooth and the number of connected components of $\mathrm{Hyp}_g[2]$ is given by $|\mathrm{Sp}(2g,\FF_2)|/|S_{2g+2}|$.
\end{thm}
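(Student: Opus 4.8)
The plan is to realise $\Hyp_g[2]$ as the union of the ``Dolgachev--Ortland component'' and its translates under the symplectic group, and then to decide when two such translates meet. First note that $\Sp(2g,\FF_2)$ acts on $\Hyp_g[2]$ by $A\cdot(C,\eta):=(C,\eta\circ A^{-1})$, an action by automorphisms of the scheme $\Hyp_g[2]$, which therefore permutes its irreducible components. Write $\phi\colon M_{0,2g+2}\to\Hyp_g[2]$ for the natural morphism and $X:=\phi(M_{0,2g+2})$ for its image; by \cite[\S3 Thm.~1]{dolgachevortland}, $\phi$ is an isomorphism of schemes onto $X$ and $X$ is an irreducible component. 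By Theorem~\ref{isomorphism_hyperelliptic_moduli_scheme} every hyperelliptic curve is of the form $C_{\mathbf{P}}$ for a configuration $\mathbf{P}$ of $2g+2$ points of $\PP^1$; hence, given $(C,\eta)\in\Hyp_g[2]$, writing $C=C_{\mathbf{P}}$ produces the point $\phi(\mathbf{P})=(C_{\mathbf{P}},\eta_0^{\mathbf{P}})\in X$ carrying its Dolgachev--Ortland level structure $\eta_0^{\mathbf{P}}$, and since any two level structures on a fixed curve differ by a unique element of $\Sp(2g,\FF_2)$ one has $(C,\eta)=A\cdot\phi(\mathbf{P})$ for some $A$. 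Thus $\Hyp_g[2]=\bigcup_{A\in\Sp(2g,\FF_2)}A\cdot X$. As each $A\cdot X$ is a translate of an irreducible component it is itself an irreducible component, and they all have dimension $\dim M_{0,2g+2}=2g-1$, so none contains another; hence $\{\,A\cdot X : A\in\Sp(2g,\FF_2)\,\}$ is exactly the set of irreducible components of $\Hyp_g[2]$.

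Next I would identify the stabiliser $H:=\{\,A : A\cdot X=X\,\}$ with the image $j(S_{2g+2})$ of the embedding $j\colon S_{2g+2}\hookrightarrow\Sp(2g,\FF_2)$ from \ref{hyperelliptic_symmetric_group_inclusion}. For $j(S_{2g+2})\subseteq H$, reordering the branch points by $\sigma\in S_{2g+2}$ leaves the curve unchanged and replaces $\eta_0^{\mathbf{P}}$ by $\eta_0^{\sigma\mathbf{P}}=\eta_0^{\mathbf{P}}\circ j(\sigma)^{-1}$, so $j(\sigma)\cdot X=X$. For the reverse inclusion, evaluate a supposed stabilising $A$ on $\phi(\mathbf{P})$ for a general configuration $\mathbf{P}$ (so that $C_{\mathbf{P}}$ has trivial reduced automorphism group): then $A\cdot\phi(\mathbf{P})=(C_{\mathbf{P}},\eta_0^{\mathbf{P}}\circ A^{-1})$ lies in $X$, hence equals $\phi(\mathbf{P}')$ for a configuration $\mathbf{P}'$ with the same branch set, forcing $\mathbf{P}'=\sigma\mathbf{P}$ in $M_{0,2g+2}$ for a unique $\sigma\in S_{2g+2}$, and comparing level structures then gives $A=j(\sigma)$. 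So $H=j(S_{2g+2})$, and $j$ being injective, $|H|=|S_{2g+2}|$. By the orbit--stabiliser theorem the irreducible components of $\Hyp_g[2]$ are in bijection with the cosets $\mathscr{C}=\Sp(2g,\FF_2)/S_{2g+2}$, so there are $|\Sp(2g,\FF_2)|/|S_{2g+2}|$ of them, and each, being a translate of $X\cong M_{0,2g+2}$, is isomorphic as a scheme to $M_{0,2g+2}$.

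The crux is then to show that distinct irreducible components are disjoint, which upgrades ``irreducible'' to ``connected'' and makes $\Hyp_g[2]$ smooth. Suppose $p\in X_{c_1}\cap X_{c_2}$; writing $p=A_i\cdot\phi(\mathbf{P}_i)$ with $A_i$ representing $c_i$, equality of these two points of the coarse space $\Hyp_g[2]$ yields an isomorphism $\alpha\colon C_{\mathbf{P}_1}\to C_{\mathbf{P}_2}$ over $\PP^1$ with $\alpha^{*}(\eta_0^{\mathbf{P}_2}\circ A_2^{-1})=\eta_0^{\mathbf{P}_1}\circ A_1^{-1}$. The key input is the functoriality of the Dolgachev--Ortland construction: if $\alpha$ covers $\beta\in\mathrm{PGL}(2,\CC)$, then $\beta$ induces a permutation $\tau\in S_{2g+2}$ of the branch sets and $\alpha^{*}\eta_0^{\mathbf{P}_2}=\eta_0^{\mathbf{P}_1}\circ j(\tau)$; substituting and cancelling the isomorphism $\eta_0^{\mathbf{P}_1}$ gives $A_1\in A_2\,j(S_{2g+2})$, i.e.\ $c_1=c_2$. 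Equivalently, this is the statement that the reduced automorphism group $\overline{\mathrm{Aut}}(C)$ of every hyperelliptic curve $C$ acts \emph{faithfully} on $\Jac(C)[2]$: it embeds into $S_{2g+2}$ since a linear automorphism of $\PP^1$ fixing all $2g+2>3$ branch points is trivial, and its action on $\Jac(C)[2]\cong\Delta$ is the restriction of the injective map $j$. Since the finitely many irreducible components are now closed, pairwise disjoint, and cover $\Hyp_g[2]$, each is also open, hence a connected component, and $\Hyp_g[2]\cong\bigsqcup_{c\in\mathscr{C}}X_c$ with each $X_c\cong M_{0,2g+2}$; in particular $\Hyp_g[2]$ is smooth. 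The main obstacle is exactly this last step: the steps above are formal once Dolgachev--Ortland's identification is granted, but pinning down the behaviour at hyperelliptic curves with large automorphism groups --- equivalently, proving $\Hyp_g[2]$ has no singularities --- is the substance of the theorem, carried out by Tsuyumine \cite{tsuyumine} via theta constants and by Runge \cite{runge} by other means.
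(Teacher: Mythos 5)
The paper does not prove Theorem~\ref{Hyperelliptic_w_level_structure_decomposition_theorem} at all: it is stated as a background result and attributed to Tsuyumine \cite{tsuyumine} and Runge \cite{runge}, so there is no ``paper's proof'' to compare against. What the paper \emph{does} prove is the analogous decomposition for trigonal superelliptic curves (Lemmas~\ref{lem_psi_A_injective}--\ref{lem_sup_m[3]_explicit_moduli} and Theorem~\ref{Superelliptic_w_level_structure_decomposition_theorem}), and it is instructive to compare your argument with that one, since the two proofs are plainly meant to be parallel.

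Your overall strategy -- take the Dolgachev--Ortland component $X$, translate it by $\Sp(2g,\FF_2)$, identify the stabiliser of $X$ with $j(S_{2g+2})$, and then show distinct translates are disjoint -- is essentially the same strategy the paper uses in the superelliptic case, but the execution of the crucial disjointness step differs. You establish disjointness by a functoriality argument: an isomorphism $\alpha$ of marked curves covers a M\"obius map $\beta$, hence induces a permutation $\tau$ of branch labels, and you assert $\alpha^*\eta_0^{\mathbf{P}_2}=\eta_0^{\mathbf{P}_1}\circ j(\tau)$, from which $c_1=c_2$ follows formally. This is believable and is the morally correct explanation, but it is asserted rather than proved; in particular you do not pin down the compatibility of the DO symplectic basis of Proposition~\ref{prop_hyperelliptic_symplectic_basis_Jac2} with a reordering that moves the point chosen as $Q_\infty$ (one must use the relation $\sum D_i=0$ to renormalise), nor do you verify that ``$\beta$ induces $\tau$'' is well defined and compatible with the automorphism acting on $\Jac(C)[2]$ exactly via $j$. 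The paper's own treatment of the analogue (Corollary~\ref{cor_forgetful_preimage_size} and Lemmas~\ref{lem_psi_A_disjoint_images}--\ref{lem_sup_m[3]_explicit_moduli}) sidesteps a direct functoriality statement and instead counts fibres of the \'etale forgetful morphism $\mathsf{F}$: it shows $|\mathrm{Im}\,\psi_A\cap\mathsf{F}^{-1}(x)|=|\mathsf{A}_\mathsf{m}|$ for every fibre, uses \'etaleness to get the scheme-theoretic isomorphism (Lemma~\ref{lem_psi_isomorphism_onto_irr_component}), and deduces the number of components by dividing $|\Sp(2g,\FF_3)|$ by that fibre count. This buys two things your write-up glosses over: it avoids the case analysis over curves with extra automorphisms (your ``evaluate at a general configuration'' step only treats the generic point, and the ``faithfulness of $\overline{\mathrm{Aut}}(C)$'' claim would have to be checked for every curve), and it handles the scheme structure, whereas your argument only shows the irreducible components are \emph{set-theoretically} disjoint, which alone does not yet give $\Hyp_g[2]\cong\bigsqcup X_c$ as schemes unless one also knows $\Hyp_g[2]$ is reduced. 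You correctly flag that the last step is where the real content lies and defer it to \cite{tsuyumine,runge}, so this is not a hidden error, but a fully self-contained proof would need to replace the functoriality heuristic with an argument along the lines of the paper's Lemmas~\ref{lem_psi_A_injective}--\ref{lem_sup_m[3]_explicit_moduli}.
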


\begin{rem}
The result of Tsuyumine from \cite[Thm. 2]{tsuyumine} is actually slightly different from the one stated in theorem \ref{Hyperelliptic_w_level_structure_decomposition_theorem}. He shows the more general result that $\mathrm{Hyp}_g[n]$ has $|\mathrm{Sp}(2g,\FF_2)|/|S_{2g+2}|$ connected components whenever $n$ is divisible by $2$. However, components are only explicitly computed in the case $n=2$.
\end{rem}

\begin{rem}
Modulo minor mistakes (pointed out in \cite{runge}), Dolgachev-Ortland \cite[\S3]{dolgachevortland} presented a weaker version of the above theorem showing that the \textit{irreducible components} of $\mathrm{Hyp}_g[2]$ are indexed by $\mathscr{C}$  and are isomorphic to $M_{0,2g+2}$.
\end{rem}

\begin{rem}
Historically, a key purpose of level structures is to rigidify moduli problems. That is, to eliminate the automorphism groups of the objects being parametrised. However, one observation from the constructions in this section is that level-2 structures do not rigidify hyperelliptic curves. To see this, we note that the symplectic basis given in Proposition \ref{prop_hyperelliptic_symplectic_basis_Jac2} is invariant under the hyperelliptic involution. 
\end{rem}

\section{Background on Superelliptic Curves}\label{background_sup_sec}

\begin{re}
In this section we will take the results from section \ref{background_hyperelliptic} as inspiration and consider a generalisation of hyperelliptic curves.
To be precise, we will consider cyclic morphisms of degree $p$ where $p$ is a prime.  We note, however, that many of these concepts can also be extended to the non-prime case. 
We will also assume that $g>0$. 
\end{re}

\begin{definition}[\textit{Superelliptic curves}]\label{superelliptic_def}
A \textit{superelliptic curve} is a  degree $p$ morphism $\pi: C \to \PP^1$ such that $C$ is a smooth (connected) curve and the Galois group of $\pi$ is cyclic. Two superelliptic curves $\pi_1: C_1 \to \PP^1$ and $\pi_2: C_2 \to \PP^1$ are said to be \textit{equivalent} if there are isomorphisms $\psi: C_1 \rightarrow C_2$ and $\varphi:\PP^1 \rightarrow \PP^1$ such that $\pi_2 \circ \psi = \varphi \circ \pi_1$. 
\end{definition}

\begin{re}[\textit{Moduli space of superelliptic curves}]  
Definition \ref{superelliptic_def} extends naturally to give a moduli functor. We denote by $\Sup^p_g$  the (coarse) moduli space of superelliptic curves
of degree $p$ and genus $g$. Here are some well known relationships with other moduli spaces:
\begin{enumerate}
\item For $d=2$ and $g>1$, definition \ref{superelliptic_def} matches with that of hyperelliptic curves and gives rise to an isomorphism $\Hyp_g \cong \Sup_g^2$. This is because a hyperelliptic curve  uniquely determines a  morphism $\pi:C\rightarrow \PP^1$ of degree 2 and because the Galois group of such a morphism is automatically cyclic. 
\item There is a natural morphism $\Sup^p_g \rightarrow M_g$ to the (coarse) moduli space of smooth genus $g$ curves. In general, this morphism is not an immersion. However, it is an immersion in special cases such as for $d=3$ when $g>4$. 
\item For $g>2$, a curve cannot be both hyperelliptic and degree 3 superelliptic, meaning that the images of $\Sup_g^2$ and $\Sup_g^3$ are disjoint in $M_g$. 
\end{enumerate}
\end{re}

\begin{re}[\textit{Ramification of superelliptic curves}]\label{re_superellitpic_ramification}
Degree $p$ superelliptic curves have the property of being totally ramified at each ramification point. In other words, each branch point has a unique preimage. Hence, by the Riemann-Hurwitz formula the number of ramification points is 
\begin{equation*}
m = \frac{2g}{p-1}+2.
\end{equation*}
This places a strong condition on the possible choices for $g$ and $p$. An equivalent condition on $g$ is obtained by specifying $m$ and $p$ to give
\[
 g = \frac{(p-1)(m-2)}{2}.
 \]
We note that when $p=2$ or $p=3$, superelliptic curves exist for all $g$. Indeed, for $d=2$ we recover $m=2g+2$ as in Section \ref{background_hyperelliptic} and when $d=3$ we obtain $m=g+2$.
\end{re}

\begin{re}[\textit{Affine model for superelliptic curves}] \label{sup_affine_model}
It is a well know result (see for example \cite[Sec. 5]{malmendiershaska})
that a superelliptic curve $\pi: C \to \PP^1$ has a non-unique affine model given by 
\[
0= s^p - f(t) 
\]
where $f \in \CC[t]$ has roots with orders less than $p$. In particular, if $C^\circ$ is the smooth locus of the above affine variety, then $C$ is the the smooth completion of $C^\circ$ and $\pi$ is the associated projection map arising from the projection to the $t$ coordinate. 

We can construct an affine cover of $C$ by considering a second chart. First, define an integer via the ceiling function $n=:\lceil \deg(f)/p \rceil$ and also $\kappa := pn - \deg(f)$. We now define the second chart by
\[
0 = u^p - v^{\kappa} h(v)
\]
where $h \in\CC[v]$ is the unique polynomial with $ h(v) = v^{\deg(f)} f(\frac{1}{v})$. The change of coordinates is then defined by $(t,s)  \mapsto \left(\frac{1}{v} ,\frac{u}{v^n}\right)$. 
\end{re}

\begin{re}[\textit{Superelliptic plane curves}] 
An immediate observation from the two chart cover constructed in \ref{sup_affine_model} is that the (singular) affine models for super elliptic curves are naturally embedded in the weighted projective plane $\PP(1,1,n)$. 

Furthermore, in the special case where $f$ has no multiple roots and $\kappa = 0$ or $\kappa = 1$, we have that the smooth superelliptic curve itself is a sub-variety of $\PP(1,1,n)$.
\end{re}

\begin{lem}[\textit{Equivalent affine models for superelliptic curves}] \label{lem_equivalence_sup}
The equivalence relation of superelliptic curves from definition \ref{superelliptic_def} is extended to affine models by the relations
\begin{enumerate}
\item $ s^2 - f(t) \, \sim\, s^2 - (ct+d)^{2g+2} f\big(\frac{at+b}{ct+d}\big)$, for  $\binom{a~b}{c~d}\in \mathrm{GL}(2,\C)$;
\item $s^p - \prod_{j} (t-a_{j})^{k_j} \,\sim \,s^p - \prod_{j} (t-a_{j})^{\zeta\cdot a_j}$, for $\zeta \in \FF_p^*$ and where $\zeta\cdot a_j$ is multiplication in $\FF_p$. 
\end{enumerate}
\end{lem}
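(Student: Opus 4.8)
The plan is to prove the two halves of the asserted equivalence relation separately: that each of the moves~(1) and~(2) turns one affine model into that of an equivalent superelliptic curve, and conversely that every equivalence of superelliptic curves is realised by a chain of such moves. Conceptually, move~(1) records the freedom in the automorphism $\varphi$ of the base $\PP^1$ from Definition~\ref{superelliptic_def}, while move~(2) records the freedom in the choice of a generator of the cyclic Galois group of $\pi$ --- equivalently, the choice of Kummer representative $f$ for the cover $C\to\PP^1$.

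For the easy direction I would argue as follows. Move~(1) is just the effect on the affine equation of the change of base coordinate $t\mapsto\frac{at+b}{ct+d}$ together with the compensating rescaling $s\mapsto\sigma/(ct+d)^{n}$, with $n=\lceil\deg f/p\rceil$ as in~\ref{sup_affine_model}; substituting into $0=s^{p}-f(t)$ and clearing denominators yields precisely the right-hand model, and the two-chart description of~\ref{sup_affine_model} shows that this birational identification extends to an isomorphism of the smooth completions over the automorphism of $\PP^1$ determined by the given matrix. In particular, scalar matrices realise multiplication of $f$ by an arbitrary nonzero constant, a fact I will reuse below. For move~(2), given $\zeta\in\FF_{p}^{*}$ I would pick integers $m_{j}$ with $k_{j}':=\zeta k_{j}-p\,m_{j}\in\{0,\dots,p-1\}$ and put $w:=s^{\zeta}\prod_{j}(t-a_{j})^{-m_{j}}$; then $w^{p}=\prod_{j}(t-a_{j})^{k_{j}'}$, and since $\zeta$ is invertible modulo $p$ one recovers $s$ as a rational function of $w$ and $t$, so this is a birational map over $\PP^1$ between two smooth projective curves, hence an isomorphism of superelliptic curves.

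For the converse, suppose $\pi_{i}:C_{i}\to\PP^1$ has affine model $0=s_{i}^{p}-f_{i}(t)$ and that $(\psi,\varphi)$ realises an equivalence $\pi_{2}\circ\psi=\varphi\circ\pi_{1}$. First I would lift $\varphi$ to $\mathrm{GL}(2,\C)$ and apply move~(1) to the model of $C_{1}$ to reduce to $\varphi=\mathrm{id}$, so that $\psi$ is an isomorphism of cyclic degree-$p$ covers of $\PP^1$ over the identity. Passing to function fields, $\psi^{*}$ identifies $\C(C_{1})$ and $\C(C_{2})$ as a single cyclic degree-$p$ extension $K$ of $F:=\C(t)$ with $K=F(\sqrt[p]{f_{1}})=F(\sqrt[p]{f_{2}})$. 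Since $\mu_{p}\subset\C$, Kummer theory matches such extensions with order-$p$ subgroups of $F^{*}/(F^{*})^{p}$, so $\langle\bar f_{1}\rangle=\langle\bar f_{2}\rangle$ and hence $f_{2}=f_{1}^{\zeta}g^{p}$ for some $\zeta\in\FF_{p}^{*}$ and $g\in F^{*}$. Comparing orders of vanishing at each affine root $a_{j}$ (and at $\infty$) then forces $g=c\prod_{j}(t-a_{j})^{-m_{j}}$ with $c\in\C^{*}$, $m_{j}\in\ZZ$, whence $f_{2}=c^{p}\prod_{j}(t-a_{j})^{\zeta k_{j}-p\,m_{j}}$; thus $f_{2}$ is obtained from $f_{1}$ by move~(2) followed by the constant rescaling contained in move~(1), as desired.

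I expect the main obstacle to be the careful treatment of the fibre(s) of the cover over $\infty\in\PP^1$: one must verify that the birational maps written above genuinely extend to the smooth completions, and that the exponent bookkeeping is consistent under both moves --- in terms of the integers $n$ and $\kappa$ of~\ref{sup_affine_model} for move~(1), and of the congruence $\deg f_{2}\equiv\zeta\deg f_{1}\pmod{p}$ (which controls whether $\infty$ is a branch point) for move~(2). Once these local checks are in place, the remaining ingredients --- the dictionary between cyclic covers of $\PP^1$ and their Kummer equations, and the elementary divisibility argument pinning down $g$ --- are routine.
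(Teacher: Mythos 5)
Your argument is correct, but it proceeds by a genuinely different route from the paper's. For part (ii) the paper works topologically over $\CC$: it reads off the monodromy $p$-cycle around each branch point from the local model $s^p=r^{k_j}$, observes that the two tuples of cycles differ by the global relabelling $n\mapsto\zeta\cdot n$ of the sheets, and invokes the Riemann existence theorem to conclude the two covers are equivalent. You instead write down the explicit rational function $w=s^{\zeta}\prod_j(t-a_j)^{-m_j}$ realising the isomorphism of function fields, check $w^p=\prod_j(t-a_j)^{k_j'}$ and recover $s$ from $w$ via $\gcd(\zeta,p)=1$; this is a purely algebraic/Kummer-theoretic argument. Your version buys two things the paper's does not: it works verbatim over any algebraically closed field containing $\mu_p$ (the monodromy argument is tied to the analytic topology), and it naturally extends to the converse direction — that \emph{every} equivalence of superelliptic covers is generated by moves (i) and (ii) — which you prove via the Kummer bijection between cyclic degree-$p$ extensions of $\CC(t)$ and cyclic subgroups of $\CC(t)^*/(\CC(t)^*)^p$, followed by the order-of-vanishing bookkeeping pinning down $g$. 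The paper only proves the forward direction of (ii) and dismisses (i) as ``straight-forward,'' so your write-up is strictly more complete. (You also tacitly repair two small slips in the statement: the exponent $\zeta\cdot a_j$ should be $\zeta\cdot k_j$, and the exponent $2g+2$ in (i) is the $p=2$ value and should be $pn$ with $n=\lceil\deg f/p\rceil$ for general $p$; your substitution handles the latter correctly.) The trade-off is that your proof is somewhat longer and leans on Kummer theory rather than the more visual monodromy picture, but it is the more robust of the two.
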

\begin{proof}
Property (i) is straight-forward and also arises in the hyperelliptic case, hence we consider only property (ii). For this we define $\eta:C\rightarrow \PP^1$ and $\eta':C'\rightarrow \PP^1$ to be the superelliptic curves defined by $s^p - \prod_{j} (t-a_{j})^{k_j}$ and $s^p - \prod_{j} (t-a_{j})^{\zeta\cdot a_j}$ where $p$ divides the sum $\sum_j k_j$. Also, since $k_i \in \FF_p^*$ for each $j$ as well as $\zeta \in \FF_p^*$, we can assume that $k_1=1$.  

We begin by considering the monodromies on $C$ arising from the ramification points. Let $x\in\PP^1$ be distinct from the branch points and consider $m$ different loops on $\PP^1$ starting and finishing at $x$. The constraint for the loops is that each contains exactly one branch point and different loops contain different branch points. 

The monodromy arising from the preimages of the loop around $a_i$ can be described by considering the equation $s^p =r$ while considering the preimage of the unit circle and identifying $x$ with $1$. In this case, the preimage of the unit circle is
\[
\left\{ \left( \left. e^{\tau\, 2\pi i }, e^{\frac{\tau}{p} \,  2\pi i} \right) ~ \right|~ t \in [0, p]  \right\}. 
\]
The points in the preimage of $1$ are the $p$-th roots of unity and we label them by their power of the primitive root of unity $e^{\frac{2\pi i}{p} }$. Then the associated permutation from the monodromy representation is the  $p$-cycle
\[
(1, 2, \ldots, (p-1)).
\]
Similarly, for the other branch points we consider the equations $s^p =r^{k_j}$. In this case, the associated permutation from the monodromy representation is the  $p$-cycle 
\[
(k_i, 2\cdot k_i, \ldots, (p-1)\cdot k_i).
\]

Now we apply the same process on $C'$ beginning with $\zeta\cdot k_1 = \zeta$. In this case, the preimage of the unit circle is
\[
\left\{ \left( \left. e^{\tau\, 2\pi i }, e^{\frac{\tau \zeta }{p} \,  2\pi i} \right) ~ \right|~ t \in [0, p]  \right\}
\]
and we again label the points in the preimage of $1$ by their power of $e^{\frac{2\pi i}{p} }$. In this way, the ramification monodromy on $C'$ is described by the permutations
\[
(\zeta\cdot k_i, 2\cdot \zeta\cdot k_i, \ldots, (p-1)\cdot \zeta \cdot k_i).
\]

Now, define $\sigma$ to be the inverse of the permutation defined by $n \mapsto \zeta \cdot n$. This gives
\[
(\zeta\cdot k_i, 2\cdot \zeta\cdot k_i, \ldots, (p-1) \cdot\zeta\cdot k_i)\,\sigma  = ( k_i, 2 \cdot k_i, \ldots, (p-1)\cdot k_i)
\]
and shows that the $\eta$ and $\eta'$ have the same monodromy data (only a different choice of labelling). The uniqueness part of the Riemann existence theorem now shows that $\eta$ and $\eta'$ must be equivalent. 
\end{proof}

\begin{rem}[\textit{Constructing superelliptic curves from points}]\label{re_superelliptic_curves_from_points}
A key aspect of the results from section \ref{background_hyperelliptic} was the observation (described in  \ref{construct_hyperelliptic_from_points}) that there is a unique hyperelliptic curve associated to a given choice of branch points. Among other things, this observation leads to the isomorphism $\Hyp_g \cong M_{0,2g+2}/S_{2g+2}$ from theorem \ref{isomorphism_hyperelliptic_moduli_scheme}. 

However, in the superelliptic case, things are more complicated. The affine model for superelliptic curves (described in \ref{sup_affine_model}) is of the form $s^p = f(t)$ and suggests that we must also take into consideration the root-orders of $f(t)$. Indeed, this is confirmed by Lemma \ref{lem_equivalence_sup}. 
\end{rem}

\begin{re}[\textit{Notation for indexing components of $\Sup^p_g$}] \label{superelliptic_indexing_convention}
We will index the possible choices of affine model $s^p = f(t)$ by indexing the possible combinations for the root-orders of $f(t)$. In this light, it makes sense to group the ramification points with same root orders. If we assume that the representative of the superelliptic curve is \textit{not} branched at $\infty\in \PP^1$ then this allows us  to express the affine model in the form
\[
0= s^p - \prod_{k=1}^{p-1} \prod_{i=1}^{m_k} (t-\alpha_{k,i})^{k}. 
\]
An initial observation from this is that the vector $(m_1,\ldots, m_{p-1})$ has the properties:
\[
\mbox{(i) }~ \sum_{k=1}^{p-1} m_k  = m
\hspace{0.75cm}
\mbox{and}
\hspace{0.75cm}
\mbox{(ii) }~ \sum_{k=1}^{p-1}k\, m_k  \equiv 0 ~(\mathrm{mod}~ p).
\]
Moreover, the equivalence relation from Lemma \ref{lem_equivalence_sup} shows that for $\zeta \in \F_p^*$, we have that $(m_1,\ldots, m_{p-1})$ and $( m_{\zeta\cdot 1}, \ldots,m_{\zeta\cdot (p-1)})$ give equivalent superelliptic curves.  On top of this, there always exists a unique choice of $\zeta \in \F_p^*$ such that the first entry is $m_1 = \mathrm{max}\,\{m_1,\ldots,m_{p-1}\}$. This leads us to define the following indexing set
\[
\mathsf{M} = 
\left\{ (m_1, \ldots, m_{p-1}) \in (\ZZ_{\geq 0} )^{p-1}
\left|\,
\begin{array}{l}
\bullet~ \sum\limits_{k=1}^{p-1} m_k  = m, \\\bullet~ \sum\limits_{k=1}^{p-1}k\, m_k  \equiv 0 ~(\mathrm{mod}~ p), \mbox{ and} \\
\bullet~m_1=\mathrm{max}\,\{m_i\}. \textcolor{white}{\Big|}
\end{array} \right.\right\}.
\]
\end{re}

\begin{re}[\textit{Group describing equivalent affine models}] \label{re_group_equivalent_affine_models}
As was the case for hyperelliptic curve, we have to account for the inherent labelling of ramification points that an affine model gives. In the hyperelliptic case, all the ramification points were the same type, so this choice was accounted for by taking the $S_{2g+2}$ quotient. 

However, things are more complicated in the superelliptic case. To begin with, for each $\mathsf{m}\in \mathsf{M}$ the group 
\[
S_{\mathsf{m}} := S_{m_1} \oplus \cdots  \oplus  S_{m_{p-1}} 
\]
accounts for permutations of the ramification points within the groupings that correspond to $\mathsf{m}$. 

Another complication is that each $\mathsf{m}\in \mathsf{M}$ may define equivalent superelliptic curves in multiple ways. This arises from the equivalence relations described in Lemma~\ref{lem_equivalence_sup} part (iii). The lemma shows that there is a natural action of  $\F_p^*$ on the $(p-1)$-tuples $(m_1,\ldots, m_{p-1})$ which is defined by 
\[
\zeta\cdot (m_1,\ldots, m_{p-1}) := ( m_{\zeta\cdot 1}, \ldots,m_{\zeta\cdot (p-1)}).
\]
Moreover, this action preserves properties (i) and (ii) from \ref{re_superelliptic_curves_from_points}. Hence we consider the subgroup of $\FF_p^*$ which stabilises a given $\mathsf{m} \in \mathsf{M}$ and denoted it by
\[
\mathrm{Stab}_p(\mathsf{m}) :=  \big\{~ \zeta\in \FF_p^* ~~\big|~~  \zeta \cdot \mathsf{m} = \mathsf{m} ~\big\}.
\]

Both $S_{\mathsf{m}}$ and $\mathrm{Stab}_p(\mathsf{m})$ are naturally subgroups of $S_m$. Moreover, both of these inclusions arise from the fact that for any given $\mathsf{m}\in \mathsf{M}$ with $\mathsf{m} = (m_1,\ldots,m_{p-1})$, we can uniquely describe each $n\in \{1,\ldots, m\}$ by a pair $(i,k)$ where $i\in \{1,\ldots, p-1\}$ and $ 0< k\leq m_i$. Explicitly, this relationship is described by
\[
n =~ k~ + \sum_{0 < j < i} m_j.
\]
Using this relationship can now write:
\begin{enumerate}
\item The inclusion $\delta: S_{\mathsf{m}}\hookrightarrow S_m$ is defined by
\[
\delta(\sigma_1,\ldots, \sigma_{p-1}) \Big(k + \sum_{0 < j < i} m_j \Big) := ~\sigma_j(k) ~+ \sum_{0 < j < i} m_j. 
\]
\item The inclusion  $\gamma: \mathrm{Stab}_p(\mathsf{m}) \hookrightarrow S_m$ is defined by
\[
\gamma(\zeta) \Big(k + \sum_{0 < j < i} m_j \Big) := ~k ~ + \sum_{0 < j < \zeta\cdot i} m_j. 
\]
\end{enumerate}

We observe that as subgroups of $S_m$, we have that the intersection $S_{\mathsf{m}} \cap \mathrm{Stab}_p(\mathsf{m})$ is trivial and that $\zeta \cdot \sigma = \sigma \cdot \zeta$ for all $\zeta \in \mathrm{Stab}_p(\mathsf{m})$ and $\sigma\in S_{\mathsf{m}}$. Hence, we also have:
\[
\mathsf{A}_{\mathsf{m}} \,:= ~S_{\mathsf{m}} \cdot\mathrm{Stab}_p(\mathsf{m})  ~\,\cong~ S_{\mathsf{m}} \oplus \mathrm{Stab}_p(\mathsf{m}).
\]
The group $\mathsf{A}_{\mathsf{m}}$ describes the equivalent ways that $\mathsf{m}\in\mathsf{M}$ can give rise to a superelliptic curve. This results in Proposition \ref{decomposition_of_Sup}. 
\end{re}

\begin{prop}[\textit{Decomposition of $\Sup^p_g$ into connected components}]\label{decomposition_of_Sup}
The set $\mathsf{M}$ defined in \ref{superelliptic_indexing_convention} indexes the connected components of $\Sup^p_g$ to give
\[
 \Sup^p_g = \bigsqcup_{\mathsf{m} \in \mathsf{M}}  \Sup^p_{g,\mathsf{m}}.
\]
For each $\mathsf{m} = (m_1,\ldots, m_{p-1}) \in \mathsf{M}$ there is an isomorphism $M_{0,m}/\mathsf{A}_{\mathsf{m}}  \cong \Sup^p_{g,\mathsf{m}} 
$ where $\mathsf{A}_{\mathsf{m}}$ is the group defined in \ref{re_group_equivalent_affine_models}. The $\mathsf{A}_{\mathsf{m}}$-quotient map composed with this isomorphism gives a morphism 
\[
\phi_{\mathsf{m}} : M_{0,m} \longrightarrow  \Sup^p_{g,\mathsf{m}}
\]
which is defined by mapping the equivalence class $\big[\,[a_1:b_1],\ldots, [a_m:b_m]\,\big]\in M_{0,m}$ to the equivalence class of superelliptic curves with the representative
\[
0= s^p - \prod_{k=1}^{p-1} \prod_{i=1}^{m_i} (a_{\varphi(k,i)} t-b_{\varphi(k,i)})^{k}, 
\]
where $\varphi(k,i) := k + \sum_{0 < j < i} m_j$. 
\end{prop}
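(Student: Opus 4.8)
The plan is to build the morphism $\phi_{\mathsf{m}}$ explicitly from the construction in Lemma~\ref{lem_equivalence_sup} and Remark~\ref{superelliptic_indexing_convention}, and then to identify its fibers with the orbits of $\mathsf{A}_{\mathsf{m}}$. First I would check that the assignment
$\big[\,[a_1:b_1],\ldots,[a_m:b_m]\,\big] \mapsto \big(0 = s^p - \prod_{k}\prod_{i}(a_{\varphi(k,i)}t - b_{\varphi(k,i)})^{k}\big)$
is well defined as a map to $\Sup^p_g$: since the points $[a_j:b_j]$ are distinct, the polynomial $f(t) = \prod_k\prod_i (a_{\varphi(k,i)}t - b_{\varphi(k,i)})^{k}$ has roots of order $k < p$, so $s^p - f(t)$ is a genuine affine model of a superelliptic curve of the correct genus by Remark~\ref{re_superellitpic_ramification} (the multiplicity conditions (i),(ii) on $\mathsf{m}$ guarantee the total degree is $\equiv 0 \pmod p$, so $\infty$ is unramified and $m = \frac{2g}{p-1}+2$ ramification points appear among the $\alpha_{k,i}$). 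Rescaling the homogeneous coordinates of an individual point $[a_j:b_j]$ only rescales one linear factor and hence multiplies $f$ by a constant, which does not change the curve; and the $\mathrm{PGL}_2$-action used to define $M_{0,m}$ corresponds exactly to relation~(i) of Lemma~\ref{lem_equivalence_sup}. This shows $\phi_{\mathsf{m}}$ descends to $M_{0,m}$ and lands in a single subset $\Sup^p_{g,\mathsf{m}}$ of $\Sup^p_g$, which I would \emph{define} to be the image; the decomposition $\Sup^p_g = \bigsqcup_{\mathsf{m}}\Sup^p_{g,\mathsf{m}}$ then follows because every superelliptic curve has an affine model as in Remark~\ref{superelliptic_indexing_convention} with a unique normalized $\mathsf{m}\in\mathsf{M}$ (uniqueness of $\mathsf{m}$ is exactly the normalization $m_1 = \max\{m_i\}$ together with the $\F_p^*$-action, modulo the ambiguity recorded in $\mathrm{Stab}_p(\mathsf{m})$, which is absorbed below).

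Next I would show $\phi_{\mathsf{m}}$ is surjective onto $\Sup^p_{g,\mathsf{m}}$ and compute its fibers. Surjectivity is essentially the definition of the image together with the fact that any curve in $\Sup^p_{g,\mathsf{m}}$ has, after moving the branch locus off $\infty$, an affine model with root-order vector $\mathsf{m}$, hence comes from some configuration of $m$ distinct points. For the fibers: two configurations $[P_1,\ldots,P_m]$ and $[P'_1,\ldots,P'_m]$ map to equivalent superelliptic curves if and only if, after applying the $\mathrm{PGL}_2$-equivalence (already quotiented in $M_{0,m}$), the corresponding affine models are related by the two remaining moves available — permuting the branch points that carry the same root order $k$, and applying the $\F_p^*$-rescaling of root orders from Lemma~\ref{lem_equivalence_sup}(ii) that fixes $\mathsf{m}$. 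The first move is precisely the action of $S_{\mathsf{m}} = S_{m_1}\oplus\cdots\oplus S_{m_{p-1}}$ via the inclusion $\delta$, and the second is precisely the action of $\mathrm{Stab}_p(\mathsf{m})$ via the inclusion $\gamma$; since these two subgroups of $S_m$ commute and meet trivially (as noted in Remark~\ref{re_group_equivalent_affine_models}), their combined action is that of $\mathsf{A}_{\mathsf{m}} \cong S_{\mathsf{m}}\oplus\mathrm{Stab}_p(\mathsf{m})$. Here I would invoke the Riemann existence theorem argument of Lemma~\ref{lem_equivalence_sup} once more to be sure there are no \emph{further} identifications: a single monodromy datum $(c_1,\ldots,c_m)$ of $p$-cycles with product the identity determines the cover up to the choice of labelling of the $m$ branch points and the choice of generator of $\Z/p$, which is exactly the $\mathsf{A}_{\mathsf{m}}$-ambiguity.

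This yields a bijection $M_{0,m}/\mathsf{A}_{\mathsf{m}} \to \Sup^p_{g,\mathsf{m}}$, and to upgrade it to an isomorphism of schemes I would argue as in the hyperelliptic case (Theorem~\ref{isomorphism_hyperelliptic_moduli_scheme}): the universal family of curves $s^p - \prod(a_jt-b_j)^{k}$ over the configuration space representing $M_{0,m}$ is $\mathsf{A}_{\mathsf{m}}$-equivariant and so descends to a family over the quotient scheme, inducing a morphism $M_{0,m}/\mathsf{A}_{\mathsf{m}} \to \Sup^p_g$ factoring through $\Sup^p_{g,\mathsf{m}}$; conversely the moduli property of $\Sup^p_{g,\mathsf{m}}$ produces the inverse, and the two are mutually inverse by the bijectivity just established. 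The connectedness of each $\Sup^p_{g,\mathsf{m}}$ follows because $M_{0,m}$ is irreducible and the quotient of an irreducible scheme is irreducible, hence connected.

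\textbf{Main obstacle.} The routine parts are the well-definedness and the scheme-theoretic descent. The genuinely delicate step is the fiber computation — verifying that the \emph{only} identifications among point configurations giving the same superelliptic curve are those coming from $S_{\mathsf{m}}$ and $\mathrm{Stab}_p(\mathsf{m})$, with no extra coincidences, and in particular being careful about the interplay between the $\mathrm{PGL}_2$-action (which may move branch points to or from $\infty$ and thereby change $\deg f$ by one, as flagged after Lemma~\ref{lem_equivalence_sup}) and the normalization that $\infty$ is unramified. Handling this cleanly via the monodromy/Riemann-existence description, rather than by manipulating affine equations directly, is where the real content lies.
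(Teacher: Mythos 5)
Your proposal matches the paper's implicit argument: the paper gives no explicit proof of Proposition~\ref{decomposition_of_Sup}, treating it as a direct consequence of the affine-model parametrization, Lemma~\ref{lem_equivalence_sup}, and the construction of $\mathsf{A}_{\mathsf{m}}$ in \ref{re_group_equivalent_affine_models}, which is precisely what you expand upon. You also correctly pinpoint the one substantive step — the fiber computation showing that the only identifications on $M_{0,m}$ are those by $S_{\mathsf{m}}$ and $\mathrm{Stab}_p(\mathsf{m})$ — and handle it via the same Riemann-existence/monodromy argument the paper uses in the proof of Lemma~\ref{lem_equivalence_sup}.
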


\begin{rem}[\textit{Different equivalence conditions}]
If we changed the equivalence relation between superelliptic curves from  definition \ref{superelliptic_def} to be the stricter condition: 
\textit{ $\pi:C\rightarrow \PP^1$ and $\pi':C'\rightarrow \PP^1$ are equivalent if there is an isomorphisms $\alpha:C\rightarrow C'$ such that $\pi \circ \alpha = \pi'$.} Then much of this section would still apply except with different equivalence conditions. 

In this case, the resulting construction in Proposition~\ref{decomposition_of_Sup} would  have been carried out using the configuration space $\mathrm{Conf}^m(\PP^1)$ in place of $M_{0,m}$. Moreover, this construction would result in a natural sub-moduli-space of the moduli space of maps $M_g(\PP^1, p)$. The resulting sub-moduli-space has links to $(p-1)$-spin Hurwitz theory and was studied by the second author in \cite{leigh_div_ram} and \cite{leigh_loc}. 
\end{rem}

\begin{rem}[\textit{Codimension in $M_g$}]
An immediate observation from Corollary~\ref{decomposition_of_Sup} is that superelliptic curves are rare among genus $g$ curves. The image of $\Sup_g^p \rightarrow M_g$ has codimension $3g-3 - (m-3)  = 3g-1 - \frac{2g}{p-1}$ which we note has a maximum value of $3g-2$ that occurs when $2g=p-1$.  
\end{rem}

\section{Divisors on superelliptic curves}\label{divisors_section}

\begin{re}[\textit{Notation for this section}]\label{divisor_section_notation}
In this section we will consider a fixed superelliptic curve $\pi: C\rightarrow \PP^1$ with $g>0$ and a choice of affine model with the same notation given in \ref{sup_affine_model}. Namely, $\pi$ will be given by the equations
\[
s^p = f(t) 
\quad \mbox{and} \quad 
u^p = v^{\kappa} h(v)
\]
where $f,h\in \CC[t]$ such that for $n=:\lceil \deg(f)/p \rceil$, $\kappa := pn - \deg(f)$ we have that $h$ is the unique polynomial with $v^{\kappa} h(v) = f(\frac{1}{v})$. The change of coordinates between the two charts is then defined by $(t,s)  \mapsto \left(\frac{1}{v} ,\frac{u}{v^n}\right)$.

Moreover, in this section, we will assume that these coordinates have been chosen such that $\pi$ is has a branch point at $\infty \in \PP^1$. This condition is equivalent to requiring that $p \nmid \deg(f)$ or equivalently requiring that  $\kappa \in \FF_p^*$. 

Lastly, we will assume an ordering for the branch points in $\PP^1\setminus\{\infty\}$. With this ordering assumed we will write $f(t) = \prod_i (t - a_i)^{k_i}$ where we have also invoked the equivalence described in Lemma~\ref{lem_equivalence_sup} to make this a monic polynomial. 
\end{re}

\begin{re}[\textit{Points at infinity and branch points}]
We will follow the standard convention and refer to the preimage $\pi^{-1}(\infty)$ as \textit{the point at infinity} and denote it by $Q_{\infty}$. In the notation from \ref{divisor_section_notation} this corresponds to the point defined by $v=u=0$ in the second chart. 
\end{re}

\begin{re} [\textit{Natural principal divisors and the superelliptic divisor class}]\label{natural_principal_superelliptic_divisors}
Recall from \ref{divisor_section_notation} that  $f(t) = \prod_{i} (t-a_{i})^{k_i}$ and observe that $pn =\kappa +  \sum_i k_i$. Then we have the natural principal divisors given by:
\begin{enumerate}
\item \textit{Horizontal:} $(s) = \left(\tfrac{u}{v^n}\right) = \sum_i k_i (Q_i  -  Q_\infty ). $
\item \textit{Vertical ramified:} $(t-a_i) = \left(\tfrac{1-a_i u}{u}\right) = p Q_i  -  p Q_\infty. $
\item \textit{Vertical unramified:} $(t-a) = \left(\tfrac{1-a u}{u}\right) = \pi^{*}P_a  -  p Q_\infty $.
\end{enumerate}
Here $a\in \CC^*\setminus\{a_1,\ldots,a_{m-1}\}$ and $P_a\in\PP^1$ is the corresponding divisor. The divisor $\pi^{*}P_a$ consists of $p$ distinct points. 

\end{re}

\begin{rem}
In \ref{natural_principal_superelliptic_divisors} and in what follows, we will always mean the divisor on the smooth curve $C$ and not the (potentially) singular affine model.  
\end{rem}

\begin{re}[\textit{$p$-torsion in the Jacobian of a superelliptic curve}]\label{re_Delta_defintion}
A key focus of this article is to study $p$-torsion within the Jacobian of a superelliptic curve
(for related, but somewhat orthogonal, investigations of this topic, see Arul's thesis \cite{arul}).
An immediate observation from \ref{natural_principal_superelliptic_divisors} is that
\[
p\cdot (Q_i  -  Q_\infty) \sim 0
\]
for each $i$. This shows that $[Q_i - Q_\infty]$ is an element of $\Jac(C)[p]$ 
(c.f. the construction in Section~\ref{background_hyperelliptic}). 
We can now consider the $\FF_p$-vector subspace of $\Jac(C)[p]$ spanned by the classes $D_i := [Q_i - Q_\infty]$ and denote it by
\[
\Delta := \FF_p\mbox{-}\mathrm{Span}\Big\{ D_1, \ldots, D_{m-1}\Big\} \subseteq \Jac(C)[p].
\]
We also have the relationship
\[
\sum_i k_i D_i  = \sum_i k_i [Q_i - Q_\infty] = \big[ (s) \big] = 0
\]
which shows that $D_1, \ldots, D_{m-2}$ is a $\FF_3$-spanning set for $\Delta$. In fact, by the following theorem, this turns out to be a basis for $\Delta$. 
\end{re}

\begin{thm}[\textit{A $\FF_p$-basis for $\Delta$, \cite[Prop. 6.1]{poonenshaefer} \& \cite[Thm. 1]{wawrow}}] \label{thm_delta_basis}  
 Let $\Delta$ be the subgroup of $\jac(C)[p]$ generated by classes of the form  $D_i=  [Q_i  - Q_\infty]$. Then $\Delta$ is isomorphic to $\FF_p^{m-2}$ and $D_1,\ldots, D_{m-2}$ is a $\FF_p$-basis for $\Delta$. 
\end{thm}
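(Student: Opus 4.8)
\section*{Proof proposal}

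The spanning half of the statement is already available from \ref{re_Delta_defintion}: the relation $\sum_i k_i D_i = \big[(s)\big] = 0$, together with the fact that $k_{m-1}$ is a unit mod $p$ (since $0 < k_{m-1} < p$), expresses $D_{m-1}$ through $D_1,\dots,D_{m-2}$, so these classes span $\Delta$. The plan is therefore to reduce the theorem to proving that $D_1,\dots,D_{m-2}$ are $\FF_p$-linearly independent. Concretely, I would assume a relation $\sum_{i=1}^{m-2} c_i D_i = 0$ in $\jac(C)$ with representatives $c_i \in \{0,\dots,p-1\}$, so that the degree-zero divisor $\sum_{i=1}^{m-2} c_i(Q_i - Q_\infty)$ — which is supported on ramification points and has coefficient $0$ at $Q_{m-1}$ — equals $\mathrm{div}(\psi)$ for some $\psi \in \CC(C)^*$, and then aim to conclude that $p \mid c_i$ for every $i$.

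The key device is the cyclic action. Let $\sigma$ generate the Galois group of $\pi$, acting on the affine model by $(t,s)\mapsto(t,\zeta s)$ for $\zeta$ a primitive $p$-th root of unity. Since $\sigma$ fixes every ramification point (pointwise), $\mathrm{div}(\sigma^*\psi) = \mathrm{div}(\psi)$, so $\sigma^*\psi/\psi$ has trivial divisor, hence is a constant, and iterating together with $\sigma^p = \mathrm{id}$ shows this constant is a power $\zeta^j$. Because $\CC(C) = \bigoplus_{j=0}^{p-1}\CC(t)\,s^j$ is precisely the eigenspace decomposition of $\CC(C)$ under $\sigma^*$ — the $\CC(t)$-basis $1, s,\dots,s^{p-1}$ consists of eigenvectors with distinct eigenvalues — I would conclude $\psi = r(t)\, s^j$ for some $r \in \CC(t)^*$. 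Then $\mathrm{div}(\psi) = \pi^*\mathrm{div}_{\PP^1}(r) + j\sum_i k_i(Q_i - Q_\infty)$, and since both $\mathrm{div}(\psi)$ and $\mathrm{div}(s)$ are supported on ramification points, so is $\pi^*\mathrm{div}_{\PP^1}(r)$; hence $r(t) = c\prod_{i=1}^{m-1}(t-a_i)^{e_i}$ with $c \in \CC^*$ and $e_i \in \ZZ$. Total ramification of $\pi$ over each $a_i$ and over $\infty$ (ramification index $p$) then gives $\pi^*\mathrm{div}_{\PP^1}(r) = \sum_{i=1}^{m-1} p\,e_i(Q_i - Q_\infty)$, so that
\[
\sum_{i=1}^{m-2} c_i (Q_i - Q_\infty) = \sum_{i=1}^{m-1} \big(p\,e_i + j\,k_i\big)(Q_i - Q_\infty).
\]
Reading off the coefficient of $Q_{m-1}$ gives $p\,e_{m-1} + j\,k_{m-1} = 0$, so $p \mid j\,k_{m-1}$; since $k_{m-1}$ is a unit mod $p$ this forces $j = 0$, and then $c_i = p\,e_i$ for each $i \le m-2$, i.e. $c_i \equiv 0 \pmod p$. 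This yields independence, and combined with the spanning statement gives $\Delta \cong \FF_p^{m-2}$ with basis $D_1,\dots,D_{m-2}$.

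I expect no essential obstacle, only two points deserving care. First, the identification of $\psi$ as an eigenfunction $r(t)s^j$: this rests on $\sigma$ fixing each ramification point pointwise and on each $\sigma^*$-eigenspace being one-dimensional over $\CC(t)$ — both routine but worth spelling out. Second, the bookkeeping of $\pi^*$ on divisors: the factor $p$ multiplying the $e_i$, combined with the invertibility of $k_{m-1}$ mod $p$, is exactly the mechanism that kills $j$ and hence the relation, so this is the step that carries the weight of the argument. (One could alternatively run everything through the Kummer sequence as in \cite{poonenshaefer}, but the divisor-theoretic route above is self-contained given the principal divisors recorded in \ref{natural_principal_superelliptic_divisors}.)
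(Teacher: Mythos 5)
Your argument is correct, and it differs from the paper in a trivial but important sense: the paper does not prove this theorem at all, but simply cites Poonen--Schaefer [Prop.\ 6.1] and Wawrow [Thm.\ 1]. Your proof is therefore a genuine self-contained alternative. The mechanism you use --- forcing a hypothetical principal divisor $\mathrm{div}(\psi)$ supported on ramification points to be a $\sigma^*$-eigenfunction $r(t)s^j$ via Galois-equivariance, then observing that $\pi^*\mathrm{div}_{\PP^1}(r)$ contributes coefficients divisible by $p$ while the invertibility of $k_{m-1}$ mod $p$ kills the exponent $j$ upon reading off the coefficient at $Q_{m-1}$ --- is the standard divisor-theoretic route for cyclic covers and it closes up cleanly. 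The two points you flag as deserving care are indeed the load-bearing ones, and both hold: the ramification points of a superelliptic cover are totally ramified and hence fixed by every element of the Galois group, which gives $\sigma^*\mathrm{div}(\psi)=\mathrm{div}(\psi)$; and the $\zeta^j$-eigenspace of $\CC(C)$ under $\sigma^*$ is exactly $\CC(t)s^j$ because $1, s,\dots, s^{p-1}$ is already a $\CC(t)$-basis of $\CC(C)$ with pairwise distinct eigenvalues. Compared with the cited Kummer/descent approach, your argument buys self-containedness within the divisor-theoretic framework the paper has already set up in Section~\ref{divisors_section}; the only slight restriction is that it is phrased over $\CC$ via roots of unity, though it transfers verbatim to any algebraically closed ground field of characteristic prime to $p$.
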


\begin{rem} 
An immediate observation from the construction in \ref{re_Delta_defintion} is that for $p >2 $ we  will not get all of $\jac(C)[p] \cong \FF_p^{2g}$. Indeed, when $2g=p-1$, the codimension of $\Delta$ in $\Jac(C)[p]$ is $2g-1$.  
\end{rem}

\begin{re}[\textit{Weil pairing on superelliptic curves}] 
Let $[E]$ and $[E']$ be two elements of $\jac(C)[p]$ and let $E \in [E]$ and $E' \in [E']$ be divisors
with disjoint support. Let $f$ and $g$ be functions on $C$ such that
\[
pE = (f) \quad \text{and} \quad pE'=(g).
\]
The Weil pairing of $[E]$ and $[E']$ is then defined as the quotient
\[
w([E],[E']):=\frac{f(E')}{g(E)} \in \mu_p
\]
where $f(E')$ is defined by $
f(E') = \prod_{P \in E'} f(P)^{\mathrm{mult}_P(E')}
$ and $g(E)$ is analogously  defined. 
The Weil pairing is a symplectic pairing on $\jac(C)[p]$.
For more details and relations to moduli, see e.g. \cite[Appx. B]{arbarellocgh} and
\cite[Sec. 7.2]{mumfordfogartykirwan}.
\end{re}

\begin{prop}\label{weil_pairing_prop}
When $p$ is odd the vector space $\Delta$ is an isotropic subspace of $\jac(C)[p]$ with respect to the Weil pairing.  In other words, for any two divisor classes $D, D'$ in $\jac(C)[p]$ we have that $w(D, D') = 1$. 

If $p$ is even and $D_i$ is the divisor class defined in \ref{re_Delta_defintion} then $w(D_i, D_j) = -1$ for $i\neq j$. 
\end{prop}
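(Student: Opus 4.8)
The plan is to compute the Weil pairing directly from the definition using the explicit principal divisors recorded in \ref{natural_principal_superelliptic_divisors}. First I would reduce to computing $w(D_i,D_j)$ on generators: since $\Delta$ is spanned by the $D_i$ and the Weil pairing is bilinear, knowing $w(D_i,D_j)$ for all $i,j$ determines the pairing on all of $\Delta$. The case $i=j$ is trivial, as $w$ is alternating (or, if one prefers to avoid that, $w(D_i,D_i)$ will drop out of the same computation with $a_i$ replaced by a nearby unramified point). So the crux is the case $i\neq j$.

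Next I would set up the data needed to evaluate $w(D_i,D_j)$. By definition we need divisors $E\in D_i$, $E'\in D_j$ with disjoint support together with functions $f,g$ on $C$ with $(f)=pE$ and $(g)=pE'$. The natural choice is $E = Q_i - Q_\infty$ and $E' = Q_j - Q_\infty$ — but these do not have disjoint support because of the common point $Q_\infty$. So the first real step is to move one of them: replace $E'$ by a linearly equivalent divisor supported away from $\{Q_i,Q_\infty\}$, for instance using the vertical unramified divisor $(t-a) = \pi^*P_a - pQ_\infty$ from \ref{natural_principal_superelliptic_divisors}(iii) to trade $Q_\infty$ for the $p$ points of a generic fibre, so that $E' \sim Q_j - \tfrac1p\pi^*P_a$ in an appropriate sense; more concretely one works with $E = Q_i - Q_\infty$, $f = t - a_i$ (so $(f) = pQ_i - pQ_\infty = pE$ by \ref{natural_principal_superelliptic_divisors}(ii)), and for $E'$ one takes a translate of $Q_j - Q_\infty$ away from the support of $f$, with corresponding function $g = (t-a_j)\cdot(\text{correction})$, or equivalently keeps $g = t-a_j$ and instead perturbs $E$. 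Either way, $w(D_i,D_j) = f(E')/g(E)$ becomes a ratio of explicit values of the functions $t-a_i$ and $t-a_j$ at ramification points and points at infinity.

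Then I would evaluate. The key numeric input is the local behaviour of $t-a_j$ at $Q_i$ and at $Q_\infty$: because $\pi$ is totally ramified, $t-a_j$ takes a nonzero finite value $(a_i-a_j)$ at $Q_i$, while at $Q_\infty$ one uses the second chart $t = 1/v$, $s = u/v^n$ to see that $t-a_j$ has a pole of order $p$ (consistent with $(t-a_j)$ contributing $-pQ_\infty$). Carrying $f(E')/g(E)$ through, the finite nonzero values $(a_i-a_j)$ cancel against each other and against their reciprocals, and one is left with a contribution coming purely from the behaviour at $Q_\infty$ — a ratio of leading coefficients in the uniformizer $v$ raised to powers determined by $p$ and the multiplicities. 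For $p$ odd this collapses to $1$, and for $p=2$ the surviving sign is $(-1)$; the parity of $p$ enters exactly through whether a factor like $(-1)^p$ or a $p$-th root of unity of even order appears. This gives both claims at once.

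The main obstacle I anticipate is the bookkeeping at $Q_\infty$: one must be careful that $(f)=pE$ genuinely holds as divisors on the \emph{smooth} curve $C$ (not on the singular affine model), that the chosen representatives really have disjoint support, and that the local expansions in the uniformizer at $Q_\infty$ are normalized consistently so that the leading-coefficient ratio is computed correctly — a sign error here is exactly what distinguishes the $p$ even and $p$ odd cases, so it cannot be glossed over. A clean way to control this is to do the whole computation symmetrically (swapping the roles of $i$ and $j$ and using $w(D_i,D_j) = w(D_j,D_i)^{-1}$ together with the value obtained) so that the ambiguous constant is forced to be a square root of unity and only its sign, governed by the parity of $p$, remains to be pinned down.
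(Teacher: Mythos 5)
Your general strategy matches the paper's: both compute the Weil pairing directly from the definition, and both correctly identify the crux as replacing $E_j=Q_j-Q_\infty$ by a linearly equivalent divisor disjoint from $\{Q_i,Q_\infty\}$. But the proposal never actually produces that divisor, and the specific device you suggest does not work. Adding the vertical unramified divisor $(t-a)=\pi^*P_a - pQ_\infty$ to $E_j$ gives $Q_j+\pi^*P_a-(p+1)Q_\infty$, so the coefficient of $Q_\infty$ gets \emph{worse}, not better; and the corrected expression $E'\sim Q_j-\tfrac1p\pi^*P_a$ is not a divisor. Writing $g=(t-a_j)\cdot(\text{correction})$ merely renames the problem: one needs an explicit rational function whose divisor has a \emph{simple zero} at $Q_\infty$ (to cancel the $-Q_\infty$ in $E_j$) and is supported away from $Q_i$, and then the correction is its $p$-th power. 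This is the technical heart of the argument, and it is nontrivial.

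The paper's choice is $(t^{n-1}-s)/t^n$, which in the chart at infinity is $u-v$; since $Q_\infty=(v,u)=(0,0)$ and $u^p=v\,g(v)$, this function vanishes to order exactly $1$ at $Q_\infty$, and its remaining zeros and poles (the points $G_\alpha$ determined by $f(t)-t^{p(n-1)}=\prod_\alpha(t-b_\alpha)^{\lambda_\alpha}$, and the fibre over $0$) avoid $Q_i$ and $Q_j$. With $F_j=E_j+(u-v)$ and $\psi=(t-a_j)(t^{n-1}-s)^p/t^{pn}$, $\varphi=t-a_i$, one gets the explicit evaluation $w([E_i],[F_j])=(-1)^{p-1}$. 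Your intuition that the $(a_i-a_j)$ factors cancel and the answer is governed by the parity of $p$ is correct, but this cannot be seen without naming the correction function; the computation also involves the auxiliary roots $b_\alpha$, which your sketch does not anticipate. Finally, the fallback symmetry argument at the end is too weak to substitute for the computation: the alternating property only gives $w(D_i,D_j)=w(D_j,D_i)^{-1}$, and the construction is not symmetric in $i,j$ (one of them is perturbed, the other is not), so you cannot conclude $w^2=1$ without already knowing the answer; and even if you could, for $p$ odd this would prove $w=1$ but would give no information for $p=2$, which is the other half of the statement.
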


\begin{proof}
We will use the notation from \ref{divisor_section_notation}. Namely that $\pi:C\rightarrow\PP^1$ is given by the polynomials $s^p = f(t)$ and $u = v^{\kappa}g(v)$. Furthermore, using 
Lemma~\ref{lem_equivalence_sup}, we may assume that $\kappa = 1$ and that $f(t) = \prod_i (t - a_i)^{k_i}$ such that $a_i \in \CC^*$ are non-zero and distinct.

From the definition in \ref{re_Delta_defintion}, we have that $\Delta$ is generated by classes with representatives of the form
 \begin{equation*}
  E_i = Q_i-Q_{\infty}, \quad i=1, \ldots, m-1.
 \end{equation*}
 Moreover, since the Weil pairing is alternating we have that $w([E_i],[E_i])=1$, it will suffice to show that $w([E_i],[E_j])=1$ for $i\neq j$. Hence, we assume that $i\neq j$. 
 
Since $E_i$ and $E_j$ do not have disjoint support we need to find a divisor $F_j$ in the class $E_j$  such that $F_j$ has support disjoint from $E_i$. The construction of the divisor $F_j$ is a key aspect of this proof.
 
To construct $F_j$ we consider the rational function $(t^{n-1}-s)/t^n \in k(C)^*$ and note that it corresponds to $u-v$ in the other chart (where $C$ is defined by the equation $u^p - v\, g(v)=0$). To determine the divisor $(u-v)$, we consider the isomorphism 
\[
\CC[u,v] /( v g(v) -u^p , u-v) \cong \CC[v]/ \big(v( g(v) - v^{p-1}  )\big)
\]
and  recall that $g(v) = \prod_i (1 - a_i v)^{k_i}$. Then we consider the two unique factorisations
\vspace{-0.4em}\begin{align}
g(v) - v^{p-1} = \prod_\alpha(1-b_\alpha v)^{\lambda_\alpha}
\hspace{0.5cm}
\mbox{and}
\hspace{0.5cm}
f(t) - t^{p(n-1)} =  \prod_\alpha(t-b_\alpha )^{\lambda_\alpha}. 
\label{b_alpha__defining_equation}
\end{align}
which  are equivalent under the change of variables $u \mapsto \frac{1}{t}$. We then denote by $G_\alpha$ the divisors on $C$ given by the points $(t,s) = (b_{\alpha},b_{\alpha})$. Then the principal divisor $(u-v)$ is given by
\[
(u-v) =  Q_\infty + \sum_\alpha \lambda_\alpha G_\alpha - n \cdot \sum_\beta S_\beta. 
\]
where $\sum S_\beta$  is the divisor of the $p$ disjoint points $\pi^{-1}(0)$.

Now we can define $F_j$ by
\[
F_j := E_j + (u-v) = Q_j + \sum_\alpha \lambda_\alpha G_\alpha - n \cdot \sum_\beta S_\beta
\]
and define the rational function $\psi \in \CC(C)^*$ by $\psi= (t-a_j)(t^{n-1}-s)^p/t^{pn}$, while defining the  rational function $\varphi \in \CC(C)^*$ by $\varphi=(t-a_i)$.  We note that by construction we have the properties required to evaluate the Weil pairing. Namely, we have that
\begin{enumerate}
\item $E_i \cap F_j = \emptyset$,
\item $(\phi) = p\cdot E_i$ and
\item $(\psi) = p\cdot F_j$.
\end{enumerate}

We will evaluate the numerator and denominator of $w([E_i],[F_j])$ individually beginning with $\psi(E_i)$. First note that in the  second chart we have that the rational function $\psi$ is given by $(-1)^p (1-a_j v)(u-v)^p/v $ and recall that $C$ is defined in this chart by the equation $u^p - v \,g(v)=0$. Now the binomial theorem shows that the rational function is given by
\begin{align*}
\psi
     &=
 (1-a_j v) \left((-1)^p g(v)   +   \sum_{\gamma=1}^p\binom{p}{\gamma} (-u)^{p-\gamma}  v^{\gamma-1}\right).
\end{align*}
In this chart we have that $Q_i$ is given by $(v,u) = (\frac{1}{a_i}, 0)$ and $Q_\infty$ is given by $(v,u) = (0,0)$, hence we can evaluate $\psi(E_i)$ as:
\begin{align}
 \psi(E_i) = \frac{\psi(Q_i)}{\psi(Q_\infty)} = \frac{(1-a_j/a_i) a_i^{1-p} }{ (-1)^p} = \frac{a_i - a_j}{(- a_i)^p}.
\label{w_psi_calc}
\end{align}

Now we will evaluate $\varphi$ at $F_j$ after recalling that $Q_j$ corresponds to $(t,s) = (a_j,0)$, $G_\alpha$ corresponds to $(t,s) = (b_{\alpha},b_{\alpha})$ and $S_\beta$ corresponds to pairs with $t=0$. Hence we have
\begin{align*}
\varphi(F_j) 
=
 \varphi(Q_j)\cdot \prod_\beta  \varphi(G_\alpha)^{\lambda_\alpha} \cdot \prod_\alpha  \varphi(S_\beta)^{-n}
 =
 \frac{(a_j-a_i) \prod_\alpha (b_\alpha - a_i)^{\lambda_\alpha} }{(-a_i)^{pn}}.
\end{align*} 
Now considering equation (\ref{b_alpha__defining_equation}) in the $t$ coordinate and noting that $f(a_i)=0$ and $\deg(f)=pn-1$ we have the equations
\begin{align}
 (a_i)^{-p} = \frac{\prod_\alpha(b_\alpha-a_j)^{\lambda_\alpha}}{(-a_i)^{pn}}
 \hspace{0.5cm}
 \mbox{and}
 \hspace{0.5cm}
 \varphi(F_j) 
=
 \frac{-(a_i-a_j) }{(a_i)^{p}}.
 \label{w_phi_calc}
\end{align}

Finally, combining equations (\ref{w_psi_calc}) and (\ref{w_phi_calc}), we have
\[
w([E_i],[F_j]) 
=
 \frac{ \psi(E_i) }{\varphi(F_j) }
 = (-1)^{p-1}
\]
which is $-1$ when $p$ is even and $1$ when $p$ is odd. 
\end{proof}

\begin{cor} \label{cor_maximal_isotopic_Delta}
 The vector space $\Delta$ is a maximal isotropic subspace of $\jac(C)[3]$ with respect to
 the Weil pairing.
\end{cor}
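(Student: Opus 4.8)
The plan is to combine the dimension count from Theorem~\ref{thm_delta_basis} with the isotropy established in Proposition~\ref{weil_pairing_prop} and the nondegeneracy of the Weil pairing. First I would specialize the general setup to $p=3$: by the Riemann--Hurwitz computation in \ref{re_superellitpic_ramification}, the number of ramification points is $m = \tfrac{2g}{p-1}+2 = g+2$. Theorem~\ref{thm_delta_basis} then gives that $\Delta \cong \FF_3^{m-2} = \FF_3^{g}$, so $\dim_{\FF_3}\Delta = g$.

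Next I would invoke Proposition~\ref{weil_pairing_prop}: since $p=3$ is odd, $\Delta$ is an isotropic subspace of $\jac(C)[3]$ with respect to the Weil pairing. Finally, recall that $\jac(C)[3] \cong \FF_3^{2g}$ carries the Weil pairing as a nondegenerate alternating bilinear form; a standard fact of symplectic linear algebra is that every isotropic subspace of a nondegenerate symplectic space of dimension $2g$ has dimension at most $g$, and an isotropic subspace is maximal (with respect to inclusion) precisely when it attains dimension $g$. Since $\Delta$ is isotropic of dimension exactly $g$, it is therefore maximal isotropic, which is the claim.

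There is essentially no obstacle here: the corollary is a direct consequence of the two preceding results together with a dimension count, and the only point requiring any care is making sure the arithmetic $m-2 = g$ is correctly traced back to the hypothesis $p=3$ (so that $\Delta$, which always has dimension $m-2$, happens to have exactly half the dimension of $\jac(C)[3]$). I would present the argument in a few lines, citing Theorem~\ref{thm_delta_basis} for the dimension, Proposition~\ref{weil_pairing_prop} for isotropy, and the nondegeneracy of the Weil pairing for maximality.
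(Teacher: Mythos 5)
Your argument is correct and follows essentially the same route as the paper: combine $\dim_{\FF_3}\Delta = m-2 = g$ (from Theorem~\ref{thm_delta_basis} and the ramification count for $p=3$) with the isotropy from Proposition~\ref{weil_pairing_prop}, and conclude by the standard fact that a $g$-dimensional isotropic subspace of a nondegenerate $2g$-dimensional symplectic space is maximal. You spell out the dimension count a bit more explicitly than the paper does, but there is no substantive difference.
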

\begin{proof}
 Any maximal isotropic subspace of $\jac(C)[3]$ has dimension $g$ and, conversely,
 any isotropic subspace of $\jac(C)[3]$ of dimension $g$ is a maximal isotropic
 subspace. Thus, since $\Delta$ has dimension $g$ the result needs follows immediately from 
 Proposition~\ref{weil_pairing_prop}. 
\end{proof}

\section{Trigonal Superelliptic Curves} \label{trig_sec}

\begin{re}[\textit{Overview of this section}] 
The case $p=3$ is the natural next step from the hyperelliptic case. In this case, superelliptic curves exist for each $g>0$ and we will see that the key decomposition result of the moduli space of hyperelliptic curves with level-2 structure from theorem \ref{Hyperelliptic_w_level_structure_decomposition_theorem} has an analogue in the $p=3$ case.
\end{re}

\begin{re}[\textit{Properties of the case $p=3$}]\label{re_p=3_specifics}
As discussed in \ref{re_superellitpic_ramification}, trigonal superelliptic curves exist for every genus $g>0$ and the number of ramification points is
\[
m = g+2. 
\]
In this case we have $\FF_3^* \cong S_2$ and the indexing set $\mathsf{M}$ from \ref{superelliptic_indexing_convention} contains ordered pairs $(m_1, m_2)$. Explicitly, the indexing set can be expressed as
\[
\mathsf{M} = \left\{ \big(m - 3i - r,\, 3i +r \big) \left| 
\begin{array}{l}
\mbox{$r\in\{0,1,2\}$ with $m = 3n - r$ for some $n\in \ZZ$ and}\\
\mbox{$i\in\ZZ$ with $0\leq i \leq (m-2r)/6$.}
 \end{array}
 \right.
\hspace{-0.25em} \right\}.
\]
Some examples of the indexing set are given in below in table \ref{table_indexing_set_p=3}. We can also examine the group $\mathsf{A}_{\mathsf{m}}$ describing equivalent affine models from \ref{re_group_equivalent_affine_models}. There are two cases:
\begin{enumerate}
\item $\mathsf{m} = (a, a)$: This case occurs whenever $m$ is even and in this case we have $\mathrm{Stab}_p(\mathrm{m}) = S_2$ and $\mathsf{A}_{\mathsf{m}} \cong S_2 \cdot (S_a \oplus S_a) \cong S_2 \oplus S_a \oplus S_a$. In terms of the explicit description of $\mathsf{M}$ given above, this occurs when $i = (m-2r)/6$. 
\item $\mathsf{m} = (a, b)$ where $a > b$: In this case we have that $\mathrm{Stab}_p(\mathrm{m})$ is trivial  and hence $\mathsf{A}_{\mathsf{m}} \cong S_a \oplus S_b$.
\end{enumerate}
\end{re}

\begin{tableRem} \label{table_indexing_set_p=3} 
Examples of the indexing set $\mathsf{M}$ for $p=3$. The values of $\mathsf{m}$ for which $S_{\mathsf{m}} \neq S_{m_1} \oplus S_{m_2}$ are shown in bold.

\noindent 
\begin{tabu}{X[c]}
\begin{tabu}{|X[c]|X[c]|X[c5]| X[0.5] |X[c]|X[c]|X[c5]|}
\cline{1-3} \cline{5-7}
$g$ & $m$ &$\mathsf{M}$ && $g$ & $m$ & $\mathsf{M}$ \\
\cline{1-3} \cline{5-7}
$1$ &$3$ & $\{(3,0)\}$ && $7$ &$9$ & $\{(9,0), (6,3) \}$ \\
$2$ &$4$ & $\{\bm{(2,2)}\}$ && $8$ &$10$ & $\{(8,2), \bm{(5,5)} \}$ \\
$3$ &$5$ & $\{(4,1)\}$ && $9$ &$11$ & $\{(10,1), (7,4) \}$ \\
$4$ &$6$ & $\{(6,0), \bm{(3,3)} \}$ && $10$ &$12$ & $\{(12,0), (9,3), \bm{(6,6)} \}$ \\
$5$ &$7$ & $\{(5,2) \}$ && $11$ &$13$ & $\{(11,2), (8,5) \}$ \\
$6$ &$8$ & $\{(7,1), \bm{(4,4)} \}$ && $12$ &$14$ & $\{(13,1), (10,4), \bm{(7,7)} \}$ \\
\cline{1-3} \cline{5-7}
\end{tabu}\\[-0.95em]
\mbox{}\\
\end{tabu}

\end{tableRem}
 
\begin{re}[\textit{Level 3 structures and moduli description of $\Sup^{3}_{g}[3]$}] 
Here we recall that the definition of a superelliptic curve with a level-3 structure is a pair 
\[
\Big(~~ \pi:C\rightarrow \PP^1, ~~\eta: \FF_3^{2g} \overset{\cong}{\longrightarrow} \Jac(C)[3]   ~~\Big)
\]
where $\pi$ is a superelliptic curve and $\eta$ is an isometry from $\FF_3^{2g}$ with the standard symplectic form to $\Jac(C)[3]$ with the Weil pairing. In other words, a level-3 structure is a choice of an ordered symplectic basis for $\Jac(C)[3]$ compatible with the Weil paring. 

The moduli space of $p=3$ superelliptic elliptic curves with level $3$ structure is denoted by $\Sup^{3}_{g}[3]$ and comes with a natural morphism
\[
\mathsf{F}: \Sup^{3}_{g}[3] \longrightarrow \Sup^{3}_{g} 
\]
which forgets the level $3$ structure.  The group $\Sp(2g,\FF_3)$ acts naturally on $\Sup^{3}_{g}[3]$ by changing basis and  has no fixed points. The morphism $\mathsf{F}$ is then equivalent to taking the quotient by $\Sp(2g,\FF_3)$ which shows that $\mathsf{F}$ is étale of degree $|\Sp(2g,\FF_3)|$.

For each $\mathsf{m}\in\mathsf{M}$, we can extend this concept to the sub-moduli space $\Sup_{g,\mathsf{m}}^3$ (defined via the decomposition in proposition \ref{decomposition_of_Sup}) by the following diagram where all squares are cartesian.
\[
\begin{tikzcd}
 \Sup_{g,\mathsf{m}}^3[3] \arrow[d, "\mathsf{F}_{\mathsf{m}}"] \arrow[r,hook]
& \Sup_{g}^3[3] \arrow[d, "\mathsf{F}"] \\
 \Sup_{g,\mathsf{m}}^3 \arrow[r, hook]
& \Sup_{g}^3
\end{tikzcd} 
\]

\end{re}

 \begin{re}[\textit{A natural basis for $\Jac(C)[3]$}] \label{extending_basis_for_3_torsion}
 
Consider a moduli point $[P_1,\ldots, P_m] \in M_{0,m}$  and let $[\pi:C\rightarrow\PP^1]$ be the associated point  in $\Sup_{g,\mathsf{m}}^3$. Recall from section \ref{divisors_section}  that  the divisor classes
\[
D_j := [Q_j  - Q_m] 
\]
for $j\in\{1,\ldots,g\}$ form a basis for a natural  subspace  $\Delta \subset \Jac(C)[3]$. It was further shown in Corollary~\ref{cor_maximal_isotopic_Delta} that $\Delta$ is a Lagrangian (a maximal isotropic subspace) and hence that $\Delta$ is a natural choice of polarisation for the pair $(\Jac(C)[3], w)$ where $w$ is the Weil pairing. 
 
 Moreover, using standard techniques, we can use the basis $(D_1, \ldots, D_g)$ to define a basis for the complimentary isotropic subspace $\Delta^c\subset\Jac(C)[3]$. In particular, for each $j\in\{1,\ldots,g\}$ the conditions
\begin{equation*}
 w(D_k,E)=1, \quad k \neq j, \quad E \in \Delta^c
\end{equation*}
define $1$-dimensional subspaces $\Lambda_j \subset \Delta^c$. On top of this, the condition 
\begin{equation*}
 w(D_j,E_j) = e^{\frac{2\pi}{3} i}
\end{equation*}
uniquely defines an element $E_j \in \Lambda_j$. So, by construction, we have a natural symplectic basis for $\jac(C)[3]$ given by $(D_1, \ldots, D_g, E_1, \ldots, E_g)$.
\end{re}

 \begin{re}[\textit{Natural morphisms to $\Sup^{3}_{g,\mathsf{m}}[3]$}] \label{re_morphism_M_Sup}
 Continuing from the situation in \ref{extending_basis_for_3_torsion}, we take a moduli point $[P_1,\ldots, P_m] \in M_{0,m}$ and let $[\pi:C\rightarrow\PP^1]$ be the associated point  in $\Sup_{g,\mathsf{m}}^3$. Using the basis from \ref{extending_basis_for_3_torsion}, we can construct an isometry
 \[
 \eta: \FF_3^{2g} \overset{\cong}{\longrightarrow} \Jac(C)[3]
 \]
 by sending the standard basis $(e_1,\ldots,e_g,f_1,\ldots, f_g)$ for $\FF_3^{2g}$ to the natural basis  $(D_1, \ldots, D_g, E_1, \ldots, E_g)$ for $\Jac(C)[3]$. This then gives rise to a natural morphism 
 \[
  \psi_{\mathrm{id}}: M_{0,m} \longrightarrow \Sup^{3}_{g,\mathsf{m}}[3]
 \]
 defined by mapping the equivalence class $[P_1,\ldots, P_m] \in M_{0,m}$ to the equivalence class $[\pi, \eta]$. Here the subscript $\mathrm{id}$ refers to the identity in $ \mathrm{Sp}(2g, \FF_3)$.

 We can also extend this concept to any $A \in \mathrm{Sp}(2g, \FF_3)$ by first considering the automorphism  $\theta_{A}: \Sup^{3}_{g,\mathsf{m}}[3] \rightarrow  \Sup^{3}_{g,\mathsf{m}}[3]$ defined by $[\pi, \eta]\mapsto [\pi, T_A\circ \eta]$, where $T_A: \FF_3\rightarrow \FF_3$ is the linear change of basis  associated to $A$. The morphism  $ \theta_A \circ \psi_{\mathrm{id}}$  is denoted by
   \[
 \psi_A : M_{0,m} \longrightarrow \Sup^{3}_{g,\mathsf{m}}[3].
 \]
 
Combining this concept with the forgetful morphism and the isomorphism from 
Proposition~\ref{decomposition_of_Sup} gives the following commutative diagram.
\begin{equation}
\begin{array}{c}
\begin{tikzcd}
M_{0,m} \arrow[r, "\psi_A"] \arrow[d]
& \Sup^{3}_{g,\mathsf{m}}[3] \arrow[d, "{\mathsf{F}_{\mathsf{m}}}"] \\
M_{0,m}/A_{\mathsf{m}} \arrow[r, "\cong"]
& \Sup^{3}_{g,\mathsf{m}}
\end{tikzcd} 
\end{array} \label{sup_w_level_3_commuting_diagram}
\end{equation}
 
\end{re}

\begin{re}[\textit{Natural group homomorphism defined by $\psi_A$}] \label{re_inclusion_Am_into_Smyplectic}

For each $\mathsf{m} = (m_1,m_2) \in\mathsf{M}$, $P\in M_{0,m}$ and 
$A\in \mathrm{Sp}(2g,\FF_3)$, the morphism $\psi_A: M_{0,m} \rightarrow \Sup^3_g[3]$ from \ref{extending_basis_for_3_torsion} defines a group homomorphism which we will denote by 
\[
\Psi_{P,A} : \mathsf{A}_{\mathsf{m}} \rightarrow \Sp(2g,\FF_3).
\]
This homomorphism can be  describe explicitly by initially considering the case $A= \mathrm{id}$.  To begin, let $[P_1,\ldots,P_m]\in M_{0,m}$ and  $(\pi:C\rightarrow \PP^1, \eta)$ be the superelliptic curve and the level-3 structure which are given by $\psi_{\mathrm{id}}$. 

The homomorphism $\Psi_{P,\mathrm{id}}$ is constructed by considering the action of $S_{\mathsf{m}}$ on $M_{0,m}$ and showing that this defines a change of basis for $\Jac(C)[3]$. A straight-forward way of accomplishing this is to consider how the generators of $\mathsf{A}_{\mathsf{m}}$ define change of basis matrices for $\Jac(C)[3]$. 

First consider the generators of $S_{\mathsf{m}}= S_{m_1}\oplus S_{m_2}$ while considering $S_{\mathsf{m}}$ as a subgroup of $S_m$. Recall that $S_{m}$ is generated by the $m-1$ transpositions $(i, i+1)$ for $i\in \{1,\ldots, m-1\}$. In the case at hand, $S_{\mathsf{m}}$ is generated by the collection $(S_{m_1}\oplus S_{m_2}) \cap \{ (i,i+1)\}$. We can now consider how these permutations define changes of basis:
\begin{enumerate}
\item \textit{For $i\notin  \{m-1, m-2\}$:} The change of basis $\Psi_{P,\mathrm{id}}(\sigma)$  is defined on each $D_j$ by considering the action of $S_{\mathsf{m}}$ on $\{Q_1,\ldots Q_m\}$. The result is
\[
\Psi_{P,\mathrm{id}}(\sigma) \cdot D_j = \left\{
\begin{array}{ll}
D_j & \mbox{for $j \notin \{i,i+1\}$,}\\
D_{i+1} & \mbox{for $j= i$,}\\
D_{i} & \mbox{for $j= i+1$,}
\end{array}
\right.
\]
which is the usual ``\textit{row-swap}'' elementary row-operation. 
\item \textit{For $\sigma := (g,g+1) = (m-2,m-1)$:}  The generator  $\sigma := (g,g+1)$ exists in the cases $m_2 =0$, $m_2=1$ and $m_2 >3$. In these cases we have
\[
\Psi_{P,\mathrm{id}}(\sigma) \cdot D_j= \left\{
\begin{array}{ll}
D_j & \mbox{for $j \neq g$,}\\
D_{m-1} & \mbox{for $j= g$.}
\end{array}
\right.
\]
To see the associated change of basis, we recall from \ref{natural_principal_superelliptic_divisors} that the natural horizontal principal divisor gives:
\begin{enumerate}
\item \textit{For $m_2 = 0$:} $ D_{m-1} \sim 2\sum_{i = 1}^{m -2} D_i$.
\item \textit{For $m_2 = 1$:} $ D_{m-1} \sim \sum_{i = 1}^{m -2} D_i$.
\item \textit{For $m_2 \geq 2 $:} $ D_{m-1} \sim \sum_{i = 1}^{m_1} D_i + 2\sum_{i=m_1+1}^{m-2} D_i $.
\end{enumerate}

\item \textit{For $\sigma := (g+1,g+2) = (m-1,m)$:}
The generator $\sigma := (g,g+1)$ exists in the cases $m_2 =0$ and $m_2 >2$. In these cases we have the same change of basis and relations as those in part (ii) except
\[
\Psi_{P,\mathrm{id}}(\sigma) \cdot D_j=  D_j + 2 \,D_{m-1}. 
\]
\end{enumerate}

We now consider the extra generators in the case when $\mathrm{Stab}_p(\mathsf{m}) \neq 0$. This case occurs when $\frac{m}{2}\in \ZZ_{>0}$ and $\mathsf{m} = (\frac{m}{2},\frac{m}{2})$  so that $\mathrm{Stab}_p(\mathsf{m}) \cong S_2$. Let $\zeta$ be the generator of $\mathrm{Stab}_p(\mathsf{m})$ then we have
\[
\Psi_{P,\mathrm{id}}(\zeta) \cdot D_j 
= \left\{
\begin{array}{ll}
D_{j+\frac{m}{2}} + 2\, D_{\frac{m}{2}} & \mbox{for $1\leq j < \tfrac{m}{2}-1$,}\\
D_{m-1} + 2\, D_{\frac{m}{2}} & \mbox{for $j = \tfrac{m}{2}-1$,}\\
 2\, D_{\frac{m}{2}} &  \mbox{for $j = \tfrac{m}{2}$,}\\
D_{j-\frac{m}{2}} + 2\, D_{\frac{m}{2}} & \mbox{for $\frac{m}{2}< j \leq m-2$,}
\end{array}
\right.
\]
where $D_{m-1}$ has been calculated above in (ii). 

Hence, we have defined a homomorphism $\Psi_{P,\mathrm{id}} : \mathsf{A}_{\mathsf{m}} \rightarrow \Sp(2g,\FF_3)$. Now, consider the automorphism $\Theta_{A}: \Sp(2g,\FF_3) \rightarrow \Sp(2g,\FF_3)$ which is defined by $ B\mapsto AB$. The group homomorphism  $\Psi_{P,A}$ is now defined by $\Psi_{P,A} := \Theta_A\circ \Psi_{P,\mathrm{id}}$. 
\end{re}

\begin{lem}\label{lem_psi_A_injective}
Let $A\in \Sp(2g,\FF_3)$ and $P\in M_{0,m}$. The morphism $\psi_A$  from \ref{re_morphism_M_Sup} and the group homomorphism $\Psi_{P,A}$ from \ref{re_inclusion_Am_into_Smyplectic} are injective. 
\end{lem}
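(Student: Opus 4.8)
The plan is to prove injectivity of $\Psi_{P,A}$ first, and then deduce injectivity of $\psi_A$ from it. Since $\Psi_{P,A} = \Theta_A \circ \Psi_{P,\mathrm{id}}$ and $\Theta_A$ (left multiplication by $A$) is a bijection of $\Sp(2g,\FF_3)$, it suffices to treat the case $A = \mathrm{id}$. Recall that $\mathsf{A}_{\mathsf{m}} \cong S_{\mathsf{m}} \oplus \mathrm{Stab}_p(\mathsf{m})$, so an element of the kernel of $\Psi_{P,\mathrm{id}}$ is a pair $(\sigma, \zeta)$ with $\sigma \in S_{\mathsf{m}} = S_{m_1}\oplus S_{m_2}$ and $\zeta \in \mathrm{Stab}_p(\mathsf{m})$ acting trivially on $\Jac(C)[3]$, equivalently acting trivially on $\Delta$ (since the action on the complementary basis $E_1,\dots,E_g$ is determined by the action on $\Delta$ via the Weil pairing conditions of \ref{extending_basis_for_3_torsion}).

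First I would treat the generic case $\mathrm{Stab}_p(\mathsf{m})$ trivial, so $\zeta = \mathrm{id}$ and we need only show $\sigma \in S_{\mathsf{m}}$ acting trivially on $D_1,\dots,D_{g}$ forces $\sigma = \mathrm{id}$. Here I would use the explicit description from \ref{re_inclusion_Am_into_Smyplectic}: for a transposition $(i,i+1)$ with $i \notin \{m-2,m-1\}$ the action is a genuine basis swap $D_i \leftrightarrow D_{i+1}$, while the boundary transpositions $(m-2,m-1)$ and $(m-1,m)$ act by sending some $D_j$ to the nontrivial class $D_{m-1}$ (respectively translating by $2D_{m-1}$), and by Theorem~\ref{thm_delta_basis} the classes $D_1,\dots,D_{m-2}$ are a genuine basis while $D_{m-1}$ is a nonzero linear combination of them with at least two nonzero coefficients — in particular $D_{m-1} \neq D_j$ for every $j$. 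More structurally, the action of $S_{\mathsf{m}}$ on $\{Q_1,\dots,Q_{m}\}$ is faithful (it is the standard action of a product of symmetric groups on a set), and since $D_j = [Q_j - Q_m]$ and the $D_j$ are linearly independent, any $\sigma$ fixing all $D_j$ must fix all $Q_j$ for $j \le g$; then using the horizontal relation $\sum_i k_i D_i = 0$ one also pins down the behavior at $Q_{m-1}$ and $Q_m$. So the cleanest route is: the composite $S_{\mathsf{m}} \to \Sp(2g,\FF_3) \to \mathrm{GL}(\Delta)$ is, after identifying $\Delta$ with $\FF_3^{g}$ via the basis $D_1,\dots,D_g$, a representation that restricted to the subgroup fixing the last markings is just the permutation representation, which is faithful; and the remaining boundary generators contribute genuinely new (non-permutation) matrices, so no nontrivial word can act trivially. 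I would spell this out by checking on generators directly from the formulas in \ref{re_inclusion_Am_into_Smyplectic}.

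Next I would handle the case $\mathsf{m} = (\tfrac{m}{2},\tfrac{m}{2})$ where $\mathrm{Stab}_p(\mathsf{m}) \cong S_2$ with generator $\zeta$. Suppose $(\sigma,\zeta^{\epsilon})$ is in the kernel, $\epsilon \in \{0,1\}$. If $\epsilon = 0$ we are back in the previous case. If $\epsilon = 1$, I would use the explicit formula for $\Psi_{P,\mathrm{id}}(\zeta)$ given in \ref{re_inclusion_Am_into_Smyplectic}: it sends $D_j$ to $D_{j\pm m/2} + 2 D_{m/2}$ (with the evident modifications near $j = m/2$), so on the basis $D_1,\dots,D_{m/2-1}, D_{m/2}, D_{m/2+1}, \dots, D_{m-2}$ it is a specific matrix that visibly swaps the two index-blocks (up to the $2 D_{m/2}$ shift) and in particular is not the identity — and composing with any $\sigma \in S_{m/2}\oplus S_{m/2}$, which preserves each block, cannot cancel this block-swap. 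Concretely, $\Psi_{P,\mathrm{id}}(\sigma\zeta)(D_1)$ lies in the span of $\{D_{m/2+1},\dots,D_{m-2},D_{m/2}, D_{m-1}\}$, which together with the horizontal relation does not contain $D_1$; hence $\Psi_{P,\mathrm{id}}(\sigma\zeta) \neq \mathrm{id}$. This gives injectivity of $\Psi_{P,\mathrm{id}}$, hence of $\Psi_{P,A}$.

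Finally, for $\psi_A$: the diagram \eqref{sup_w_level_3_commuting_diagram} exhibits $\psi_A$ as a lift of the quotient map $M_{0,m} \to M_{0,m}/\mathsf{A}_{\mathsf{m}} \xrightarrow{\cong} \Sup^3_{g,\mathsf{m}}$. Two points $P, P'$ of $M_{0,m}$ with $\psi_A(P) = \psi_A(P')$ have the same image in $\Sup^3_{g,\mathsf{m}}$, hence $P' = a \cdot P$ for some $a \in \mathsf{A}_{\mathsf{m}}$; but then comparing level structures, the two level-$3$ structures on the common curve differ by $\Psi_{P,A}(a)$ (this is exactly the content of the construction in \ref{re_inclusion_Am_into_Smyplectic}: moving $P$ by $a$ changes the natural basis by $\Psi_{P,A}(a)$), and equality of level structures forces $\Psi_{P,A}(a) = \mathrm{id}$, so $a = \mathrm{id}$ by injectivity of $\Psi_{P,A}$, whence $P = P'$. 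I expect the main obstacle to be the bookkeeping in the case-analysis of \ref{re_inclusion_Am_into_Smyplectic} — in particular verifying carefully, using Theorem~\ref{thm_delta_basis} and the horizontal relations, that the boundary generators and the $\mathrm{Stab}_p(\mathsf{m})$-generator really do act by matrices distinct from every permutation matrix coming from the interior of $S_{\mathsf{m}}$, so that no cancellation is possible; everything else is formal.
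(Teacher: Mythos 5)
Your overall architecture matches the paper: reduce to $A=\mathrm{id}$, prove injectivity of $\Psi_{P,\mathrm{id}}$, and then deduce injectivity of $\psi_{\mathrm{id}}$ from it via the commutative diagram \eqref{sup_w_level_3_commuting_diagram}. That last deduction is essentially the paper's argument and is fine. The gap is in the central step.

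Your ``structural'' claim --- that ``any $\sigma$ fixing all $D_j$ must fix all $Q_j$ for $j\le g$, then the horizontal relation pins down $Q_{m-1}$ and $Q_m$'' --- skips the real subtlety. The divisor classes are defined relative to a basepoint, $D_j=[Q_j-Q_m]$, so a permutation $\sigma$ acts by $\Psi_{P,\mathrm{id}}(\sigma)\cdot D_j = D_{\sigma(j)}-D_{\sigma(m)}$: there is a shift by the image of the basepoint. If $\sigma(m)\neq m$ this is \emph{not} a permutation of the $D_j$'s, so $\Psi_{P,\mathrm{id}}(\sigma)\cdot D_j=D_j$ does not immediately read off $\sigma(j)=j$. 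One must first prove $\sigma(m)=m$, which is exactly what the paper does: it observes that fixing $D_1,\dots,D_g$ forces $\Psi_{P,\mathrm{id}}(\sigma)$ to fix $D_{m-1}$ (a linear combination of the former) and (trivially) $D_m=0$, and then, putting $n=\sigma^{-1}(m)$, the condition on $D_n$ yields a relation among the $D_i$ which Theorem~\ref{thm_delta_basis} rules out unless $n=m$. Only after $\sigma(m)=m$ does the ``permutation representation is faithful'' argument close. Your sketch never establishes this, and without it the conclusion $\sigma=\mathrm{id}$ does not follow.

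Your alternative route via generators has a different, independent problem: checking that each generator of $\mathsf{A}_{\mathsf{m}}$ is mapped to a nonidentity matrix, and that the boundary and $\mathrm{Stab}_p(\mathsf{m})$ generators are ``different in kind'' from the interior transpositions, does not show the kernel is trivial. A nontrivial \emph{word} in the generators could still act trivially. You acknowledge this (``so that no cancellation is possible'') but offer no mechanism. The paper avoids the issue entirely by arguing directly about an arbitrary element of the kernel rather than about generators, which is both shorter and watertight. I would recommend you replace the generator bookkeeping with the paper's direct argument: assume $\sigma$ is in the kernel, extend the fixed set to $D_{m-1}$ and $D_m$ via the horizontal relation, deduce $\sigma(m)=m$ from Theorem~\ref{thm_delta_basis}, and then read off $\sigma(j)=j$ for $j\le m-2$.
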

\begin{proof}
Since for $A\neq \mathrm{id}$, the morphisms $\psi_A$ and $\Psi_A$ are defined by post-composing with automorphisms, we are only required to consider the case when $A=\mathrm{id}$. 

To consider whether $\psi_{\mathrm{id}}$ is injective take two points in $P,P'\in M_{0,m}$ which have $\psi_{\mathrm{id}}(P) = \psi_{\mathrm{id}}(P')$. We also let $D_1,\ldots, D_g$ and $D'_1,\ldots, D'_g$ be the associated natural bases constructed from divisors in \ref{re_Delta_defintion} and corresponding to the images $\psi_{\mathrm{id}}(P)$ and $\psi_{\mathrm{id}}(P')$. 

The combination of the condition $\psi_{\mathrm{id}}(P) = \psi_{\mathrm{id}}(P')$  and fact that the diagram (\ref{sup_w_level_3_commuting_diagram}) commutes shows that $P$ and $P'$ must be in the same $\mathsf{A}_{\mathsf{m}}$-class. Taking $\sigma\in \mathsf{A}_{\mathsf{m}}$ to be such that $P' = \sigma \cdot P$, we observe that this implies $D'_i = \Psi_{P,\mathrm{id}}(\sigma)\cdot D_i$ for all $i\in\{1,\ldots,g\}$. The condition $\psi_{\mathrm{id}}(P) = \psi_{\mathrm{id}}(P')$ is now only true if $D_i = \Psi_{P,{\mathrm{id}}}(\sigma)\cdot D_i$ for all $i\in\{1,\ldots,g\}$. Hence we have shown that $\psi_{\mathrm{id}}$ is injective only if 
$\Psi_{P,\mathrm{id}}$ is injective for all $P\in M_{0,m}$.

Let $\sigma \in \mathsf{A}_{\mathsf{m}}$ be such that $\Psi_{P,\mathrm{id}}(\sigma)\cdot D_i = D_i$ for all $i\in \{1, \ldots, g\}$. The relations given in \ref{re_Delta_defintion} show that 
$\Psi_{P,\mathrm{id}}(\sigma)\cdot D_{g+1} = D_{g+1}$ as well as $\Psi_{P,\mathrm{id}}(\sigma)\cdot D_{g+2} = D_{g+2} = 0$. Now, let $n \in \{1,\ldots, m\}$ be $n:=\sigma^{-1}(m)$. 
Then the condition $\Psi_{P,\mathrm{id}}(\sigma)\cdot D_n= D_n$ implies 
\[
Q_n - Q_m \sim Q_{m} - Q_{\sigma(n)} 
\]
and hence $D_n = - D_{\sigma(n)}$. This is not one of relations from \ref{re_Delta_defintion}, which are shown to be all the relations in Theorem~\ref{thm_delta_basis}, so we must have $D_n = 0$ and hence $n=m$. So now, for any $i\in \{1, \ldots, g\}$, the condition $\Psi_{P,\mathrm{id}}(\sigma)\cdot D_i = D_i$  implies 
\[
Q_i - Q_m \sim Q_{\sigma(i)} - Q_{m} 
\]
and $D_i = D_{\sigma(i)}$. Again, the relations from Definition~\ref{re_Delta_defintion} and 
Theorem~\ref{thm_delta_basis} show that this implies $\sigma(i) = i$ for any $i\in \{1, \ldots, g\}$. Hence we have that $\sigma = \mathrm{id}$. Thus, we have shown that $\Psi_{P,\mathrm{id}}$ has trivial kernel and so is injective. 
\end{proof}

\begin{lem}\label{lem_psi_isomorphism_onto_irr_component}
For $A \in \Sp(2g, \FF_3)$ the morphism $\psi_{A}$ is an isomorphism onto an irreducible component of $\Sup^{3}_{g,\mathsf{m}}[3]$. 
\end{lem}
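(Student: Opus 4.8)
The plan is to reduce immediately to the case $A=\mathrm{id}$ — for general $A$ one has $\psi_A=\theta_A\circ\psi_{\mathrm{id}}$ with $\theta_A$ an automorphism of $\Sup^{3}_{g,\mathsf{m}}[3]$, which carries an irreducible component isomorphically onto an irreducible component — and then to show that $\psi_{\mathrm{id}}$ is a finite, birational morphism onto an entire (connected, hence irreducible) component of $\Sup^{3}_{g,\mathsf{m}}[3]$ whose target is normal, so that the morphism is forced to be an isomorphism.

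First I would read off from the commutative diagram (\ref{sup_w_level_3_commuting_diagram}) that $\mathsf{F}_{\mathsf{m}}\circ\psi_{\mathrm{id}}$ equals, up to the isomorphism of Proposition~\ref{decomposition_of_Sup}, the quotient morphism $q\colon M_{0,m}\to M_{0,m}/\mathsf{A}_{\mathsf{m}}$, which is finite and surjective. The morphism $\mathsf{F}_{\mathsf{m}}$, being a base change of the étale morphism $\mathsf{F}$, is étale, in particular separated; since $\mathsf{F}_{\mathsf{m}}\circ\psi_{\mathrm{id}}=q$ is proper, $\psi_{\mathrm{id}}$ is proper, and it is quasi-finite because it is injective (Lemma~\ref{lem_psi_A_injective}). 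A proper quasi-finite morphism is finite, so $\psi_{\mathrm{id}}$ is finite; hence its image $Z:=\psi_{\mathrm{id}}(M_{0,m})$ is a closed irreducible subvariety of dimension $\dim M_{0,m}=m-3=g-1$, and $\psi_{\mathrm{id}}\colon M_{0,m}\to Z$ is finite, surjective, and generically one-to-one, hence (in characteristic zero) birational.

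Next I would pin down $Z$. The quotient $M_{0,m}/\mathsf{A}_{\mathsf{m}}$ of the smooth variety $M_{0,m}$ by the finite group $\mathsf{A}_{\mathsf{m}}$ is normal, so $\Sup^{3}_{g,\mathsf{m}}[3]$ — being étale over $\Sup^{3}_{g,\mathsf{m}}\cong M_{0,m}/\mathsf{A}_{\mathsf{m}}$ — is normal as well, and, being étale over the irreducible variety $\Sup^{3}_{g,\mathsf{m}}$, it is of pure dimension $g-1$; for a normal scheme the irreducible components are open and closed, i.e. they coincide with the connected components. Therefore the closed irreducible subvariety $Z$, of full dimension $g-1$, is exactly one such component, and it is normal. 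Finally, $\psi_{\mathrm{id}}\colon M_{0,m}\to Z$ is finite and birational onto the normal variety $Z$, so $(\psi_{\mathrm{id}})_{*}\mathcal{O}_{M_{0,m}}$ is a finite $\mathcal{O}_{Z}$-subalgebra of the function field of $Z$ and hence equals $\mathcal{O}_{Z}$; thus $\psi_{\mathrm{id}}$ is an isomorphism onto the component $Z$, and running this for every $A$ gives the statement.

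The delicate point is the middle step: upgrading the bare injectivity of Lemma~\ref{lem_psi_A_injective} to ``closed image equal to a whole component''. This rests on combining two inputs — separatedness of $\mathsf{F}_{\mathsf{m}}$ (to get that $\psi_{\mathrm{id}}$ is proper, hence finite with closed image) and normality of $\Sup^{3}_{g,\mathsf{m}}[3]$ (to force the finite birational map $M_{0,m}\to Z$ to be an isomorphism rather than merely a normalization-type map). An equivalent and perhaps cleaner packaging forms $W:=M_{0,m}\times_{\Sup^{3}_{g,\mathsf{m}}}\Sup^{3}_{g,\mathsf{m}}[3]$ along $q$ and $\mathsf{F}_{\mathsf{m}}$: commutativity of (\ref{sup_w_level_3_commuting_diagram}) makes $\psi_{\mathrm{id}}$ factor through a section $M_{0,m}\hookrightarrow W$ of the étale projection $W\to M_{0,m}$, which is then a clopen immersion identifying $M_{0,m}$ with a connected component of the smooth scheme $W$, after which one analyses the other projection $W\to\Sup^{3}_{g,\mathsf{m}}[3]$ exactly as above.
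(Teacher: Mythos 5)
Your proposal is correct, and it starts the same way as the paper's proof: both use the commuting diagram (\ref{sup_w_level_3_commuting_diagram}) to deduce finiteness of $\psi_A$ from finiteness of the quotient map and of $\mathsf{F}_{\mathsf{m}}$ (the paper via \cite[Tag 035D]{stacksproject}, you via properness of the composite plus separatedness of $\mathsf{F}_{\mathsf{m}}$ plus ``proper $+$ quasi-finite $=$ finite''), and both then identify the image as a full component by a dimension count together with irreducibility of $M_{0,m}$. Where you diverge is in the endgame. The paper upgrades the injectivity of Lemma~\ref{lem_psi_A_injective} directly to ``closed immersion,'' then shows the induced map $\varepsilon\colon M_{0,m}\to\mathcal{A}$ is étale by vanishing of cotangent complexes (using that $\mathsf{q}_{\mathsf{m}}$ and $\mathsf{F}_{\mathsf{m}}$ are étale), and finally invokes ``surjective flat closed immersion $\Rightarrow$ isomorphism.'' You instead observe that $\Sup^3_{g,\mathsf{m}}[3]$ is normal (étale over the finite quotient $M_{0,m}/\mathsf{A}_{\mathsf{m}}$ of a smooth variety) and of pure dimension $g-1$, so that its irreducible components coincide with its connected components and $Z$ is one of them; then ``finite, birational, onto a normal target $\Rightarrow$ isomorphism'' finishes. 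This is a genuinely different closing move, and arguably a more robust one: it never needs to assert that $\psi_A$ is a closed immersion before having étaleness in hand (a finite injective morphism need not be a closed immersion in general — think of the normalization of a cuspidal cubic — so the paper's step implicitly leans on the unramifiedness that is only established afterwards, or on a more refined criterion than bare injectivity). The one small thing to make explicit in your write-up is why the finite injective map $M_{0,m}\to Z$ is \emph{birational}: in characteristic zero a finite dominant morphism of integral varieties of the same dimension is generically étale, so injectivity on closed points forces generic degree one. With that spelled out, the argument is complete.
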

\begin{proof}
Denote the quotient map by $\mathsf{q}_{\mathsf{m}}: M_{0,m} \rightarrow M_{0,m}/\mathsf{A}_{\mathsf{m}}$. We have that both $\mathsf{q}_{\mathsf{m}}$  and the morphism ${\mathsf{F}_{\mathsf{m}}}$ forgetting the level-3 structure are finite. Hence we have that $\psi_A$ is finite as well (see for example \cite[Tag 01WJ \& Tag 035D]{stacksproject}). Thus, since Lemma~\ref{lem_psi_A_injective} shows that $\psi_A$ is  injective  we have that it is a closed immersion (see for example \cite[Tag 03BB]{stacksproject}).

Since the quotient map $\mathsf{q}_{\mathsf{m}}$ and ${\mathsf{F}_{\mathsf{m}}}$ are both finite and surjective, we have that $M_{0,m}$ and  $\Sup^{3}_{g,\mathsf{m}}[3]$ are both the same dimension (see for example \cite[Tag 01WJ \& Tag 0ECG]{stacksproject}). Now, let $\mathcal{A}\subseteq \Sup^{3}_{g,\mathsf{m}}$ be the irreducible component that contains the image of $\psi_A$, and let $\varepsilon : M_{0,m}\rightarrow \mathcal{A}$ be the associated morphism. Since $M_{0,m}$ is irreducible, we must have that $\varepsilon$ is surjective.

On top of this, both $\mathsf{q}_{\mathsf{m}}$ and ${\mathsf{F}_{\mathsf{m}}}$ are étale, so by the vanishing of cotangent complexes we have that $\varepsilon$ is étale as well. In conclusion, $\varepsilon$ is a surjective flat closed immersion and hence $\varepsilon$ is an isomorphism (see \cite[Tag 04PW]{stacksproject}). 
\end{proof}

\begin{cor}\label{cor_forgetful_preimage_size}
Let $x=[\pi] \in \Sup^3_g$ be a geometric point and let $A\in \Sp(2g,\FF_3)$. If $[\pi, \eta]$ and $[\pi, \eta']$ are both in the image $\mathrm{Im}\, \psi_A$ then there is a unique $B\in \mathsf{A}_{\mathsf{m}}$ such that $\eta' = T_B\circ \eta$ (where $T_B$ is as defined in \ref{re_morphism_M_Sup}). Moreover, we have that 
\[
\big|\,\mathrm{Im}\, \psi_A \cap \mathsf{F}_{\mathsf{m}}^{-1}(x)\,\big| = | \mathsf{A}_{\mathsf{m}}|. 
\]
\end{cor}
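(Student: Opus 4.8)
The plan is to reduce everything to the quotient map $\mathsf{q}_{\mathsf{m}}\colon M_{0,m}\to M_{0,m}/\mathsf{A}_{\mathsf{m}}$ by transporting along the isomorphism $\psi_A$ and the commutative square~(\ref{sup_w_level_3_commuting_diagram}). First I would record the following identification. By Lemma~\ref{lem_psi_isomorphism_onto_irr_component}, $\psi_A\colon M_{0,m}\to Z_A:=\mathrm{Im}\,\psi_A$ is an isomorphism onto an irreducible component, and by~(\ref{sup_w_level_3_commuting_diagram}) the composite $\mathsf{F}_{\mathsf{m}}\circ\psi_A$ is $\mathsf{q}_{\mathsf{m}}$ followed by the isomorphism $M_{0,m}/\mathsf{A}_{\mathsf{m}}\cong\Sup^3_{g,\mathsf{m}}$ of Proposition~\ref{decomposition_of_Sup}. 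Hence, up to these two isomorphisms, the restriction $\mathsf{F}_{\mathsf{m}}|_{Z_A}$ is exactly $\mathsf{q}_{\mathsf{m}}$, and in particular $Z_A\cap\mathsf{F}_{\mathsf{m}}^{-1}(x)=\psi_A\big(\mathsf{q}_{\mathsf{m}}^{-1}(\bar x)\big)$, where $\bar x$ is the image of $x$ in $M_{0,m}/\mathsf{A}_{\mathsf{m}}$ (note that this is empty, and $|\mathsf{A}_{\mathsf{m}}|\neq 0$, unless $x$ lies in the substack $\Sup^3_{g,\mathsf{m}}$, which we therefore assume).

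For the cardinality assertion I would then use that $\mathsf{q}_{\mathsf{m}}$ is the quotient of $M_{0,m}$ by the finite group $\mathsf{A}_{\mathsf{m}}$ and is étale (as shown in the proof of Lemma~\ref{lem_psi_isomorphism_onto_irr_component}); thus the $\mathsf{A}_{\mathsf{m}}$-action is free and $\mathsf{q}_{\mathsf{m}}$ is finite étale of degree $|\mathsf{A}_{\mathsf{m}}|$. Fixing $P\in M_{0,m}$ with $\mathsf{q}_{\mathsf{m}}(P)=\bar x$ we get $\mathsf{q}_{\mathsf{m}}^{-1}(\bar x)=\mathsf{A}_{\mathsf{m}}\cdot P$, a free orbit of size $|\mathsf{A}_{\mathsf{m}}|$, so $\big|Z_A\cap\mathsf{F}_{\mathsf{m}}^{-1}(x)\big|=|\mathsf{A}_{\mathsf{m}}|$.

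For the first assertion, write $[\pi,\eta]=\psi_A(P)$ and $[\pi,\eta']=\psi_A(P')$ with $P,P'\in M_{0,m}$ (unique, since $\psi_A$ is an isomorphism onto $Z_A$). Since $\mathsf{F}_{\mathsf{m}}$ sends both images to $x$, the identification above gives $\mathsf{q}_{\mathsf{m}}(P)=\mathsf{q}_{\mathsf{m}}(P')$, hence $P'=B\cdot P$ for a unique $B\in\mathsf{A}_{\mathsf{m}}$ (using freeness). To turn $P'=B\cdot P$ into a relation between the level structures I would invoke the construction of $\Psi_{P,A}$ in~\ref{re_inclusion_Am_into_Smyplectic}, which records precisely how relabelling the marked points by an element of $\mathsf{A}_{\mathsf{m}}$ transforms the natural symplectic basis, and hence the isometry, built in~\ref{re_morphism_M_Sup}; this yields $\psi_A(B\cdot P)=[\pi,\,T_{\Psi_{P,A}(B)}\circ\eta]$, so $\eta'=T_{\Psi_{P,A}(B)}\circ\eta$. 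Under the embedding $\Psi_{P,A}\colon\mathsf{A}_{\mathsf{m}}\hookrightarrow\Sp(2g,\FF_3)$, which is injective by Lemma~\ref{lem_psi_A_injective}, this is the claimed relation $\eta'=T_B\circ\eta$, and uniqueness of $B$ follows from injectivity of $\Psi_{P,A}$ together with the invertibility of $\eta$.

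The main obstacle I anticipate is the bookkeeping in the last paragraph: making the identity $\psi_A(B\cdot P)=[\pi,T_{\Psi_{P,A}(B)}\circ\eta]$ precise, with the left/right conventions and the interplay of $\Psi_{P,\mathrm{id}}$ with $\Psi_{P,A}=\Theta_A\circ\Psi_{P,\mathrm{id}}$ matched to those of~\ref{re_morphism_M_Sup} and~\ref{re_inclusion_Am_into_Smyplectic}. Intertwined with this is the need to know that the $\mathsf{A}_{\mathsf{m}}$-action on $M_{0,m}$ is genuinely free, so that orbits — and hence fibres of $\mathsf{q}_{\mathsf{m}}$ — have the full size $|\mathsf{A}_{\mathsf{m}}|$; this is exactly where the injectivity of $\Psi_{P,A}$ from Lemma~\ref{lem_psi_A_injective}, equivalently the étaleness of $\mathsf{q}_{\mathsf{m}}$ from Lemma~\ref{lem_psi_isomorphism_onto_irr_component}, does the essential work. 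Everything else is formal diagram chasing in~(\ref{sup_w_level_3_commuting_diagram}).
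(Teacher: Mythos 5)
Your proposal is correct and follows essentially the same route as the paper's one-sentence argument: both reduce the claim to Lemma~\ref{lem_psi_isomorphism_onto_irr_component} (that $\psi_A$ is an isomorphism onto an irreducible component) together with the fact that the bottom row of diagram~(\ref{sup_w_level_3_commuting_diagram}) realizes $M_{0,m}\to\Sup^3_{g,\mathsf{m}}$ as the fixed-point-free quotient by $\mathsf{A}_{\mathsf{m}}$. Your more spelled-out version — transporting along $\psi_A$ to fibers of $\mathsf{q}_{\mathsf{m}}$, and then using the injectivity of $\Psi_{P,A}$ from Lemma~\ref{lem_psi_A_injective} to convert the orbit relation $P'=B\cdot P$ into $\eta'=T_B\circ\eta$ — simply makes explicit the bookkeeping that the paper's terse proof leaves implicit.
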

\begin{proof}
This follows from Lemma~\ref{lem_psi_isomorphism_onto_irr_component} since $\psi_A$ is an isomorphism on an irreducible component of $\Sup^3_{g,\mathsf{m}}[3]$ and the morphism $M_{0,m}\rightarrow \Sup^3_{g,\mathsf{m}}$ described by diagram (\ref{sup_w_level_3_commuting_diagram}) is the fixed-point-free quotient of $M_{0,m}$ by $\mathsf{A}_{\mathsf{m}}$. 
\end{proof}

\begin{lem} \label{lem_psi_A_disjoint_images}
Let $A,A' \in \Sp(2g,\FF_3)$. If the images $\mathrm{Im}\,\psi_{A}$ and $\mathrm{Im}\,\psi_{A'}$ are not disjoint, then
$\mathrm{Im}\, \psi_{A} = \mathrm{Im}\, \psi_{A'} $. 
\end{lem}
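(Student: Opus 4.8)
The plan is to reduce the statement to the geometric fact that the irreducible components of a normal scheme are pairwise disjoint, together with Lemma~\ref{lem_psi_isomorphism_onto_irr_component}, which identifies each image $\mathrm{Im}\,\psi_A$ with an irreducible component of $\Sup^3_{g,\mathsf{m}}[3]$.

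First I would show that $\Sup^3_{g,\mathsf{m}}[3]$ is normal. By Proposition~\ref{decomposition_of_Sup} there is an isomorphism $\Sup^3_{g,\mathsf{m}} \cong M_{0,m}/\mathsf{A}_{\mathsf{m}}$. Since $m = g+2 \geq 3$, the space $M_{0,m}$ is a smooth variety and hence normal, and the quotient of a normal variety by a finite group (we are in characteristic zero) is again normal; thus $\Sup^3_{g,\mathsf{m}}$ is normal. The forgetful morphism $\mathsf{F}_{\mathsf{m}}: \Sup^3_{g,\mathsf{m}}[3] \to \Sup^3_{g,\mathsf{m}}$ is a base change of the étale morphism $\mathsf{F}$ and is therefore étale. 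As an étale morphism is flat with reduced $0$-dimensional fibres, normality passes from the base to the total space, so $\Sup^3_{g,\mathsf{m}}[3]$ is normal.

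Now the argument concludes quickly. A normal scheme is locally irreducible: each local ring $\mathcal{O}_{X,x}$ is a domain, so every point lies on a unique irreducible component, and hence distinct irreducible components of $\Sup^3_{g,\mathsf{m}}[3]$ are disjoint. By Lemma~\ref{lem_psi_isomorphism_onto_irr_component}, the subsets $\mathrm{Im}\,\psi_A$ and $\mathrm{Im}\,\psi_{A'}$ are irreducible components, so if they are not disjoint then a common point lies on a unique irreducible component, forcing $\mathrm{Im}\,\psi_A = \mathrm{Im}\,\psi_{A'}$.

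The main obstacle I anticipate is making the normality of $\Sup^3_{g,\mathsf{m}}[3]$ fully rigorous: one must be confident that Proposition~\ref{decomposition_of_Sup} genuinely exhibits $\Sup^3_{g,\mathsf{m}}$ as the quotient of the smooth variety $M_{0,m}$ by the finite group $\mathsf{A}_{\mathsf{m}}$, and that normality is transferred along the cartesian square defining $\mathsf{F}_{\mathsf{m}}$. If one prefers to avoid normality, there is a more hands-on route: writing $\psi_A = \theta_A \circ \psi_{\mathrm{id}}$, a common point $\psi_A(P) = \psi_{A'}(P')$ maps under $\mathsf{F}_{\mathsf{m}}$ to $\mathsf{q}_{\mathsf{m}}(P) = \mathsf{q}_{\mathsf{m}}(P')$ by diagram~(\ref{sup_w_level_3_commuting_diagram}), so $P' = \sigma \cdot P$ for some $\sigma \in \mathsf{A}_{\mathsf{m}}$; combining the equivariance of $\psi_{\mathrm{id}}$ with respect to the homomorphism $\Psi_{P,\mathrm{id}}$ of \ref{re_inclusion_Am_into_Smyplectic}, the fixed-point freeness of the $\Sp(2g,\FF_3)$-action on $\Sup^3_g[3]$, and the injectivity of $\psi_{\mathrm{id}}$ from Lemma~\ref{lem_psi_A_injective}, one deduces that $A$ and $A'$ differ by an element of $\Psi_{P,\mathrm{id}}(\mathsf{A}_{\mathsf{m}})$, whence $\mathrm{Im}\,\psi_A = \mathrm{Im}\,\psi_{A'}$ since the image depends only on the corresponding coset. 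For this route the main obstacle is bookkeeping the precise compatibility between $\Psi_{P,\mathrm{id}}$ and the automorphisms $\theta_A$.
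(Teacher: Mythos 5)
Your proposal is correct, and the main route you give is genuinely different from the paper's. The paper argues purely in the category of morphisms: it takes a common point $[\pi,\eta]$ of the two images, writes $\eta = T_A\circ\tau = T_{A'}\circ\tau'$ with $[\pi,\tau],[\pi,\tau']\in\mathrm{Im}\,\psi_{\mathrm{id}}$, invokes Corollary~\ref{cor_forgetful_preimage_size} to produce $B\in\mathsf{A}_{\mathsf{m}}$ with $\tau'=T_B\circ\tau$, deduces $A=A'B$, and then uses $\psi_A=\Theta_{A'}\circ\psi_B$ together with $\mathrm{Im}\,\psi_{\mathrm{id}}=\mathrm{Im}\,\psi_B$. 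Your primary route instead reads the conclusion off the geometry of the ambient scheme: $\Sup^3_{g,\mathsf{m}}[3]$ is normal (via $\Sup^3_{g,\mathsf{m}}\cong M_{0,m}/\mathsf{A}_{\mathsf{m}}$ from Proposition~\ref{decomposition_of_Sup} and the fact that $\mathsf{F}_{\mathsf{m}}$ is étale), so its local rings are domains, so its irreducible components are pairwise disjoint; combined with Lemma~\ref{lem_psi_isomorphism_onto_irr_component}, which identifies each $\mathrm{Im}\,\psi_A$ with an irreducible component, the lemma follows immediately. Both arguments are sound and there is no circularity — Lemma~\ref{lem_psi_isomorphism_onto_irr_component} does not depend on the present lemma, and you establish normality independently rather than quoting the smoothness statement in Theorem~\ref{Superelliptic_w_level_structure_decomposition_theorem}, which comes later. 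The trade-off: your argument is shorter and more conceptual, hinging on a topological fact about normal schemes, while the paper's argument is more explicit about cosets, which is exactly the bookkeeping it needs again in the counting argument of Lemma~\ref{lem_sup_m[3]_explicit_moduli}; so the paper's route front-loads work it will reuse, whereas yours gets the disjointness for free but would still require something like Corollary~\ref{cor_forgetful_preimage_size} downstream. Your second, sketched ``hands-on'' route is essentially the paper's approach and you correctly identify the main bookkeeping issue (the compatibility of $\Psi_{P,\mathrm{id}}$ with the $\theta_A$), but since the normality route already closes the proof cleanly, no further development is needed.
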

\begin{proof}
Suppose that the images $\mathrm{Im}\,\psi_{A}$ and $\mathrm{Im}\,\psi_{A'}$ are not disjoint and take a point  $[\pi, \eta] \in \Sup^3_g[3]$ in the intersection. Then, by the construction of $\psi_A$ and $\psi_{A'}$, we must have that $\eta = T_{A} \circ \tau $ and $\eta = T_{A'} \circ \tau' $ for points $[\pi,\tau],[\pi,\tau']\in \mathrm{Im}\,\psi_{\mathrm{id}}$. 

Hence by Corollary~\ref{cor_forgetful_preimage_size} we have that there is a $B\in  \mathsf{A}_{\mathsf{m}}$ such that $\tau' = T_B\circ \tau$. Combining that with the equality $\eta = T_{A} \circ \tau =  T_{A'} \circ \tau' $ we see that $T_{A} = T_{A'}\circ T_{B}$ and hence $A = A'B$. 

Now, by the definition of $\psi_{A}$ we have that $\psi_{A}= \Theta_{A'} \circ \psi_B$ and $\psi_{A'} = \Theta_{A'} \circ \psi_{\mathrm{id}}$. The result now follows from the observation that $\mathrm{Im}\, \psi_{\mathrm{id}} = \mathrm{Im}\, \psi_B$. 
\end{proof}

\begin{lem} \label{lem_sup_m[3]_explicit_moduli}
The scheme $\Sup^{3}_{g,\mathsf{m}}[3]$ has 
$
{|\Sp(2g,\FF_3)|}/{  |\mathsf{A}_{\mathsf{m}}|}
$
 connected components. Each connected component is irreducible and is isomorphic to $M_{0,m}$. 
\end{lem}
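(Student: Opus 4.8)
The plan is to assemble Lemma~\ref{lem_sup_m[3]_explicit_moduli} entirely from the preceding lemmas, treating $\psi_{\mathrm{id}}$ and its twists $\psi_A$ as the building blocks whose images tile $\Sup^{3}_{g,\mathsf{m}}[3]$. First I would observe that since $\mathsf{F}_{\mathsf{m}}$ is \'etale of degree $|\Sp(2g,\FF_3)|$ and surjects onto $\Sup^{3}_{g,\mathsf{m}}$ (via the cartesian diagram and Proposition~\ref{decomposition_of_Sup}), every geometric point $[\pi,\eta]\in\Sup^{3}_{g,\mathsf{m}}[3]$ lies over some $[\pi]\in\Sup^{3}_{g,\mathsf{m}}$, and by construction of $\psi_{\mathrm{id}}$ there is \emph{some} level-$3$ structure $\tau$ with $[\pi,\tau]\in\mathrm{Im}\,\psi_{\mathrm{id}}$; writing $\eta=T_A\circ\tau$ for the appropriate $A\in\Sp(2g,\FF_3)$ shows $[\pi,\eta]\in\mathrm{Im}\,\psi_A$. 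Hence $\Sup^{3}_{g,\mathsf{m}}[3]=\bigcup_{A\in\Sp(2g,\FF_3)}\mathrm{Im}\,\psi_A$.

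Next I would invoke Lemma~\ref{lem_psi_isomorphism_onto_irr_component}: each $\mathrm{Im}\,\psi_A$ is an irreducible component isomorphic to $M_{0,m}$, and these components are open (being \'etale images, or equivalently closed-and-open since each is a full irreducible component of a smooth scheme whose components are disjoint). By Lemma~\ref{lem_psi_A_disjoint_images} distinct images are either equal or disjoint, so the set $\{\mathrm{Im}\,\psi_A : A\in\Sp(2g,\FF_3)\}$ is a partition of $\Sup^{3}_{g,\mathsf{m}}[3]$ into copies of $M_{0,m}$; since $M_{0,m}$ is irreducible (hence connected), each such piece is a connected component, and every connected component arises this way. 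Therefore the number of connected components equals the number of \emph{distinct} images $\mathrm{Im}\,\psi_A$.

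To count the distinct images I would use the relation $\mathrm{Im}\,\psi_A=\mathrm{Im}\,\psi_{A'}$ iff $A^{-1}A'\in\Psi_{P,\mathrm{id}}(\mathsf{A}_{\mathsf{m}})$, which is exactly what the proof of Lemma~\ref{lem_psi_A_disjoint_images} establishes (there $A=A'B$ for $B\in\mathsf{A}_{\mathsf{m}}$, using the identification $\psi_A=\Theta_{A'}\circ\psi_B$ and $\mathrm{Im}\,\psi_{\mathrm{id}}=\mathrm{Im}\,\psi_B$). Thus the images are indexed by the right cosets of the subgroup $\Psi_{P,\mathrm{id}}(\mathsf{A}_{\mathsf{m}})\leq\Sp(2g,\FF_3)$, and by Lemma~\ref{lem_psi_A_injective} the homomorphism $\Psi_{P,\mathrm{id}}$ is injective, so $|\Psi_{P,\mathrm{id}}(\mathsf{A}_{\mathsf{m}})|=|\mathsf{A}_{\mathsf{m}}|$. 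Counting cosets gives exactly $|\Sp(2g,\FF_3)|/|\mathsf{A}_{\mathsf{m}}|$ connected components.

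The main obstacle, and the step deserving the most care, is the bookkeeping in the counting argument: one must check that the equivalence $\mathrm{Im}\,\psi_A=\mathrm{Im}\,\psi_{A'}\iff A^{-1}A'\in\Psi_{P,\mathrm{id}}(\mathsf{A}_{\mathsf{m}})$ is a genuine biconditional (the proof of Lemma~\ref{lem_psi_A_disjoint_images} gives one direction; the converse follows from $\psi_{AB}=\Theta_A\circ\psi_B$ together with $\mathrm{Im}\,\psi_B=\mathrm{Im}\,\psi_{\mathrm{id}}$ for $B\in\mathsf{A}_{\mathsf{m}}$, which in turn uses Corollary~\ref{cor_forgetful_preimage_size}), and that this subgroup $\Psi_{P,\mathrm{id}}(\mathsf{A}_{\mathsf{m}})$ is independent of the auxiliary point $P\in M_{0,m}$ for counting purposes — or at least that its order $|\mathsf{A}_{\mathsf{m}}|$ is. Everything else is a formal consequence of Lemmas~\ref{lem_psi_A_injective}, \ref{lem_psi_isomorphism_onto_irr_component} and \ref{lem_psi_A_disjoint_images}, with the surjectivity of $\mathsf{F}_{\mathsf{m}}$ and the fixed-point-free $\Sp(2g,\FF_3)$-action supplying the covering of $\Sup^{3}_{g,\mathsf{m}}[3]$ by the images.
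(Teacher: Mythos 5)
Your proof is correct and follows the same overall decomposition as the paper: establish that $\bigcup_{A}\mathrm{Im}\,\psi_A$ covers $\Sup^{3}_{g,\mathsf{m}}[3]$, invoke Lemma~\ref{lem_psi_A_disjoint_images} to get ``disjoint or equal,'' and Lemma~\ref{lem_psi_isomorphism_onto_irr_component} to identify each piece as a copy of $M_{0,m}$. The only genuine divergence is in the counting step. The paper picks a minimal subset $\mathcal{B}\subset\Sp(2g,\FF_3)$ whose images cover, then fixes a single geometric point $x\in\Sup^{3}_{g,\mathsf{m}}$ and counts its $\mathsf{F}_{\mathsf{m}}$-fiber: combining $|\mathsf{F}_{\mathsf{m}}^{-1}(x)|=|\Sp(2g,\FF_3)|$ with $|\mathrm{Im}\,\psi_A\cap\mathsf{F}_{\mathsf{m}}^{-1}(x)|=|\mathsf{A}_{\mathsf{m}}|$ from Corollary~\ref{cor_forgetful_preimage_size} gives $|\Sp(2g,\FF_3)|=|\mathcal{B}|\cdot|\mathsf{A}_{\mathsf{m}}|$. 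You instead count cosets of the subgroup $\Psi_{P,\mathrm{id}}(\mathsf{A}_{\mathsf{m}})\le\Sp(2g,\FF_3)$, using the injectivity from Lemma~\ref{lem_psi_A_injective} to get its order. Both routes are valid and short, but the paper's fiber count neatly sidesteps the issue you rightly flag about whether $\Psi_{P,\mathrm{id}}$ depends on $P$; for your coset argument to be self-contained you should note explicitly (as the matrix description in \ref{re_inclusion_Am_into_Smyplectic} makes clear) that $\Psi_{P,\mathrm{id}}$ is actually $P$-independent, so the relation $\mathrm{Im}\,\psi_A=\mathrm{Im}\,\psi_{A'}$ really does partition $\Sp(2g,\FF_3)$ into cosets of a single fixed subgroup. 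One small slip: the pieces $\mathrm{Im}\,\psi_A$ are open because they are finitely many pairwise-disjoint closed irreducible components (so each has closed complement), not because they are ``\'etale images'' --- $\psi_A$ is a closed immersion, not an \'etale map --- but your second parenthetical reason is the right one, and smoothness is not needed for that point.
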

\begin{proof}
We begin by claiming that the morphism 
\[
\bigsqcup_{A\in \Sp(2g, \FF_3)} \psi_A : \bigsqcup_{A\in \Sp(2g, \FF_3)} M_{0,m} \longrightarrow \Sup^3_g[3]
\]
is surjective. So see this, take any geometric point $[\pi:C\rightarrow \PP^1,\eta']\in \Sup^3_g[3]$ and consider the point $[\pi]\in \Sup^3_g$. We know from diagram (\ref{sup_w_level_3_commuting_diagram}) that the morphism $M_{0,m} \rightarrow \Sup^3_g$ is surjective, so let $P \in M_{0,m}$ be be any point in the preimage of $[\pi]$ under this morphism. We now have that $\psi_{\mathrm{id}}(P) = [\pi,\eta]$ for some isometry $\eta: \FF_3^{2g} \rightarrow \Jac(C)[3]$. Considering the composition $\eta' \circ \eta^{-1} \circ \eta  = \eta'$ we observe that $\eta' \circ \eta^{-1}$ corresponds to symplectic change of basis $\FF^{2g}_3\rightarrow \FF^{2g}_3$ and hence $\eta' \circ \eta^{-1} = T_A$ for some $A\in \Sp(2g,\FF_3)$. This proves the claimed surjectivity. 

Let $\mathcal{B} \subset \Sp(2g,\FF_3)$ have the following properties:
\begin{enumerate}
\item $\bigsqcup_{A\in \mathcal{B}} \psi_A$ is surjective.
\item If $\mathcal{B}' \subset \Sp(2g,\FF_3)$ has the property that $\bigsqcup_{A\in \mathcal{B}'} \psi_A $ is surjective then we have $|\mathcal{B}| \leq |\mathcal{B}'|$.
\end{enumerate}
Such a $\mathcal{B}$ will always exist but may not be unique. Moreover, combining property (ii) with 
Lemma~\ref{lem_psi_A_disjoint_images} we must have that for each pair $A,A'\in \mathcal{B}$ with $A\neq A'$ that the images $\mathrm{Im}\,\psi_A$ and $\mathrm{Im}\,\psi_{A'}$ are disjoint. 

Now, letting $x \in \Sup^3_g$ be a geometric point, we have that
\[
\mathsf{F}_{\mathsf{m}}^{-1}(x) = \bigsqcup_{A\in\mathcal{B}} \big(\,\mathrm{Im}\, \psi_A \cap \mathsf{F}_{\mathsf{m}}^{-1}(x)\,\big).
\]
Hence using Corollary~\ref{cor_forgetful_preimage_size}  and the fact that $|\mathsf{F}_{\mathsf{m}}^{-1}(x)| = |\Sp(2g,\FF_3)|$ we have 
\[
 |\Sp(2g,\FF_3)|= \sum_{A\in\mathcal{B}} \big|\,\mathrm{Im}\, \psi_A \cap \mathsf{F}_{\mathsf{m}}^{-1}(x)\,\big| 
 = |\mathcal{B}| \cdot |\mathsf{A}_{\mathsf{m}}|.
\]
Since $|\mathcal{B}|$ is the number of connected components, the desired result now follows. 
\end{proof}

Theorem~\ref{mainthm}
now follows immediately from Lemma \ref{lem_sup_m[3]_explicit_moduli} and the description of the indexing set $\mathsf{M}$ from \ref{re_p=3_specifics}. 
We also summarize what we have obtained in alternative way to highlight the analogy
with Theorem~\ref{Hyperelliptic_w_level_structure_decomposition_theorem}.

\begin{thm}[\textit{Connected components of $\Sup^{3}_{g}[3]$}] \label{Superelliptic_w_level_structure_decomposition_theorem} 
For  $\mathsf{m}\in\mathsf{M}$, denote the quotient set $\mathrm{Sp}(2g,\FF_p)/\mathsf{A}_{\mathsf{m}}$ by
$\mathscr{C}_{\mathsf{m}}$. Then, if $X_{c}$ denotes a copy of $M_{0,m}$ for each $c \in \mathscr{C}_{\mathsf{m}}$,  there is an isomorphism of schemes
\begin{equation*}
\Sup^{3}_{g}[3] \overset{\cong}{\longrightarrow} 
\bigsqcup_{\mathsf{m}\in\mathsf{M}} 
\bigsqcup_{c \in \mathscr{C}_{\mathsf{m}}} X_c.
\end{equation*}
In particular, $\Sup^{3}_{g}[3]$ is smooth.
\end{thm}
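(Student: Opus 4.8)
The plan is to obtain the statement by assembling results already in hand, with no essentially new argument needed. The first step is to reduce to a single $\mathsf{m}\in\mathsf{M}$. By Proposition~\ref{decomposition_of_Sup} one has $\Sup^3_g=\bigsqcup_{\mathsf{m}\in\mathsf{M}}\Sup^3_{g,\mathsf{m}}$ as a disjoint union of open-and-closed subschemes, and $\Sup^3_{g,\mathsf{m}}[3]$ was defined as the fibre product of $\mathsf{F}\colon\Sup^3_g[3]\to\Sup^3_g$ along $\Sup^3_{g,\mathsf{m}}\hookrightarrow\Sup^3_g$; base change of a decomposition into open-and-closed pieces then gives $\Sup^3_g[3]=\bigsqcup_{\mathsf{m}\in\mathsf{M}}\Sup^3_{g,\mathsf{m}}[3]$. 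Hence it suffices to construct, for each fixed $\mathsf{m}$, an isomorphism $\Sup^3_{g,\mathsf{m}}[3]\cong\bigsqcup_{c\in\mathscr{C}_{\mathsf{m}}}X_c$ with each $X_c$ a copy of $M_{0,m}$.

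Next, for fixed $\mathsf{m}$, I would fix a base point $P\in M_{0,m}$ and invoke Lemma~\ref{lem_psi_A_injective} to identify $\mathsf{A}_{\mathsf{m}}$, via the injective homomorphism $\Psi_{P,\mathrm{id}}$, with a subgroup of $\Sp(2g,\FF_3)$; a different choice of $P$ replaces this subgroup by a conjugate, so the cardinality $|\mathscr{C}_{\mathsf{m}}|=|\Sp(2g,\FF_3)|/|\mathsf{A}_{\mathsf{m}}|$ is well defined. I would then combine three facts: by Lemma~\ref{lem_psi_isomorphism_onto_irr_component} each $\psi_A$ is an isomorphism onto an irreducible component of $\Sup^3_{g,\mathsf{m}}[3]$; by the argument in the proof of Lemma~\ref{lem_psi_A_disjoint_images}, $\mathrm{Im}\,\psi_A=\mathrm{Im}\,\psi_{A'}$ whenever $A'\in A\cdot\mathsf{A}_{\mathsf{m}}$ while the two images are disjoint otherwise; and by the surjectivity of $\bigsqcup_{A\in\Sp(2g,\FF_3)}\psi_A$ established in the proof of Lemma~\ref{lem_sup_m[3]_explicit_moduli}, the images $\mathrm{Im}\,\psi_A$ cover $\Sup^3_{g,\mathsf{m}}[3]$. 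It follows that, on choosing a transversal $\{A_c:c\in\mathscr{C}_{\mathsf{m}}\}$ for the cosets, the subschemes $\mathrm{Im}\,\psi_{A_c}$ are pairwise disjoint and exhaust $\Sup^3_{g,\mathsf{m}}[3]$; since there are $|\Sp(2g,\FF_3)|/|\mathsf{A}_{\mathsf{m}}|$ of them, this agrees with the count in Lemma~\ref{lem_sup_m[3]_explicit_moduli}, and in particular (again by that lemma) each $\mathrm{Im}\,\psi_{A_c}$ is a connected component of $\Sup^3_{g,\mathsf{m}}[3]$, hence open-and-closed.

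Finally I would assemble the isomorphism: letting $X_c$ be a copy of $M_{0,m}$ for $c\in\mathscr{C}_{\mathsf{m}}$, the morphism $\bigsqcup_{c}\psi_{A_c}\colon\bigsqcup_{c}X_c\to\Sup^3_{g,\mathsf{m}}[3]$ maps each summand isomorphically onto the open-and-closed piece $\mathrm{Im}\,\psi_{A_c}$, and these pieces partition the target, so the morphism is an isomorphism of schemes. Taking the disjoint union over $\mathsf{m}\in\mathsf{M}$ and using the first step gives
\[
\Sup^3_g[3]\overset{\cong}{\longrightarrow}\bigsqcup_{\mathsf{m}\in\mathsf{M}}\bigsqcup_{c\in\mathscr{C}_{\mathsf{m}}}X_c ,
\]
and smoothness of $\Sup^3_g[3]$ follows at once because $M_{0,m}$ is smooth and smoothness is local on the source. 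I expect the only genuinely delicate point to be the bookkeeping in the second step that identifies the connected components with the cosets of $\mathsf{A}_{\mathsf{m}}$ — specifically, upgrading the dichotomy of Lemma~\ref{lem_psi_A_disjoint_images} (two images are either disjoint or equal) to the precise statement that $\mathrm{Im}\,\psi_A=\mathrm{Im}\,\psi_{A'}$ holds exactly when $A$ and $A'$ determine the same element of $\mathscr{C}_{\mathsf{m}}$; once this is checked, the theorem is a formal consequence of Lemmas~\ref{lem_psi_A_injective}, \ref{lem_psi_isomorphism_onto_irr_component}, \ref{lem_psi_A_disjoint_images} and \ref{lem_sup_m[3]_explicit_moduli}.
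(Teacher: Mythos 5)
Your argument is correct and is essentially the paper's own: the paper states the theorem as a repackaging of Lemma~\ref{lem_sup_m[3]_explicit_moduli} together with the decomposition of $\Sup^3_g$ over $\mathsf{M}$, and you have simply unwound that lemma's proof (surjectivity of $\bigsqcup\psi_A$, the coset dichotomy from Lemma~\ref{lem_psi_A_disjoint_images}, and the component-isomorphism from Lemma~\ref{lem_psi_isomorphism_onto_irr_component}) into an explicit scheme isomorphism indexed by $\mathscr{C}_{\mathsf{m}}$. The bookkeeping you flag as delicate — that $\mathrm{Im}\,\psi_A=\mathrm{Im}\,\psi_{A'}$ exactly when $A$ and $A'$ lie in the same left coset of $\mathsf{A}_{\mathsf{m}}$ — is already established in the proof of Lemma~\ref{lem_psi_A_disjoint_images}, so nothing further is needed.
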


\subsection*{Acknowledgements}
The authors thank Jonas Bergstöm and Dan Petersen for useful conversations. 


\bibliographystyle{amsalpha}

\renewcommand{\bibname}{References}
\bibliography{references} 
\end{document}